\newtheoremstyle{df}{8pt}{}{}{0cm}{\bf}{.\ }{0pt}{}
\numberwithin{equation}{section}
\newtheorem{tw}{Theorem}[section]
\newtheorem{lm}[tw]{Lemma}
\newtheorem{stw}[tw]{Proposition}
\newtheorem{wn}[tw]{Corollary}
\newtheorem{uw}[tw]{Remark}
\theoremstyle{df}
\newtheorem{ex}[tw]{Example}
\newcommand{\R}{\mathbb{R}}
\newcommand{\C}{\mathbb{C}}
\newcommand{\Ct}{\widetilde{\mathbb{C}}}
\newcommand{\X}{\mathcal{X}}
\newcommand{\const}{\operatorname{const}}
\newcommand{\D}{\operatorname{D}}
\newcommand{\id}{\operatorname{id}}
\newcommand{\DD}{\mathcal{D}}
\newcommand{\Cniesk}{{C^\infty}}
\newcommand{\Jt}{\widetilde{J}}
\newcommand{\pszm}[1]{\frac{\partial}{\partial{#1}}}
\begin{document}
\baselineskip=17pt
\title{On $\widetilde{J}$-tangent affine hyperspheres}
\author{Zuzanna Szancer}
%

\keywords{Affine hypersphere, Para-complex affine hypersurface, Para-complex affine hypersphere, Calabi product}
\subjclass[2010]{53A15, 53D15}
\begin{abstract}
    In this paper we study $\Jt$-tangent affine hyperspheres, where $\Jt$ is the canonical para-complex structure on $\R ^{2n+2}$. The main purpose of this paper is to give a classification of $\Jt$-tangent affine hyperspheres of an arbitrary dimension with an involutive distribution $\DD$. In particular, we classify all such hyperspheres in the $3$-dimensional case. We also show that there is a direct relation between $\Jt$-tangent affine hyperspheres and Calabi products. As an application we obtain certain classification results. In particular, we show that, with one exception, all odd dimensional proper flat affine hyperspheres are, after a suitable affine transformation, $\Jt$-tangent. Some examples of $\Jt$-tangent affine hyperspheres are also given.
\end{abstract}

\maketitle

\section{Introduction}
\par Para-complex and paracontact geometry plays an important role in mathematical physics. On the other hand affine differential geometry and in particular affine hyperspheres have been extensively studied over past decades. Some relations between para-complex and affine differential geometry can be found in \cite{LS}, \cite{CLS} and \cite{SZ}.
\par In \cite{SZ2} the author studied $J$-tangent affine hypersurfaces and gave a local classification of $J$-tangent affine hyperspheres with an involutive contact distribution.
\par In this paper we study real affine hyperspheres $f\colon M^{2n+1}\rightarrow \R ^{2n+2}\cong \Ct^{n+1} $ of the para-complex space $\Ct ^{n+1}$ with a $\Jt$-tangent transversal vector field $C$ and an induced almost paracontact structure $(\varphi,\xi,\eta)$. First we show that when $C$ is centro-affine (not necessarily Blaschke) then $f$ can be locally expressed in the form:
\begin{equation}\label{eq::wst}
f(x_1,\ldots,x_{2n},z)=\Jt g(x_1,\ldots,x_{2n})\cosh z-g(x_1,\ldots,x_{2n})\sinh z,
\end{equation}
where $g$ is some smooth immersion defined on an open subset of $\R^{2n}$. Basing on the above result we provide a local classification of all $\Jt$-tangent affine hyperspheres with an involutive distribution $\DD$. We also show that there are no improper $\Jt$-tangent affine hyperspheres.
In particular, using results from \cite{SZ}, we find all $3$-dimensional $\Jt$-tangent affine hyperspheres with the involutive distribution $\DD$. We also give an example of a $\Jt$-tangent affine hypersphere with non-involutive distribution $\DD$.
\par In section 2 we briefly recall the basic formulas of affine differential geometry, we recall the notion of an affine hypersphere and some basic results from para-complex geometry. We also recall the notion of a para-complex affine hypersphere (for details we refer to \cite{SZ}).
\par In section 3 we recall the definitions of an almost paracontact structure introduced for the first time in \cite{KW}. We also recall some elementary results for induced almost paracontact structures that will be used later in this paper.
\par Sections 4 and 5 contain the main results of this paper. In the section 4 we introduce the notion of a $\Jt$-tangent affine hypersphere and prove classification results. In particular, we show that $\Jt$-tangent affine hyperspheres must be proper and there is a strict relation between $\Jt$-tangent affine hyperspheres with the involutive distribution $\DD$ and proper para-complex affine hyperspheres. Finally we show that $\Jt$-tangent affine hyperspheres can be constructed using lower dimensional proper affine hyperspheres. As an application, we classify all $3$-dimensional proper $\Jt$-tangent affine hyperspheres with the involutive distribution $\DD$.
\par In the section 5 we show some applications of the results obtained in section 4. We show that $\Jt$-tangent affine hyperspheres can be classified in terms of Calabi products. Among other we show that (with one exception) all odd dimensional proper flat affine hyperspheres are $\Jt$-tangent affine hyperspheres with
the involutive distribution $\DD$. Moreover, we show that the above mentioned exceptional affine hypersphere is $J$-tangent, where $J$ is the standard complex
structure on $\R^{2n+2}$.

\section{Preliminaries}
We briefly recall the basic formulas of affine differential
geometry. For more details, we refer to \cite{NS}.
\par Let $f\colon M\rightarrow\R^{n+1}$ be an orientable
connected differentiable $n$-dimensional hypersurface immersed in
the affine space $\R^{n+1}$ equipped with its usual flat connection
$\D$. Then for any transversal vector field $C$ we have
\begin{equation}\label{eq::FormulaGaussa}
\D_Xf_\ast Y=f_\ast(\nabla_XY)+h(X,Y)C
\end{equation}
and
\begin{equation}\label{eq::FormulaWeingartena}
\D_XC=-f_\ast(SX)+\tau(X)C,
\end{equation}
where $X,Y$ are vector fields tangent to $M$. It is known that $\nabla$ is a torsion-free connection, $h$ is a symmetric
bilinear form on $M$, called \emph{the second
fundamental form}, $S$ is a tensor of type $(1,1)$, called \emph{the
shape operator}, and $\tau$ is a 1-form, called \emph{the transversal connection form}. Recall that the formula \eqref{eq::FormulaGaussa} is known as the formula of Gauss and the formula \eqref{eq::FormulaWeingartena} is known as the formula of Weingarten.
\par For a hypersurface immersion $f\colon M\rightarrow \R^{n+1}$
a transversal vector field $C$ is said to be \emph{equiaffine}
(resp. \emph{locally equiaffine}) if $\tau=0$ (resp. $d\tau=0$).
For an affine hypersurface $f\colon M\rightarrow \R^{n+1}$ with the transversal vector field $C$ we consider the following volume element on $M$:
$$
\Theta(X_1,\ldots,X_n):=\det[f_\ast X_1,\ldots,f_\ast X_n,C]
$$ for all $X_1,\ldots ,X_n\in \X(M) $. We call $\Theta $ \emph{the induced volume element} on $M$.
Immersion $f\colon M\rightarrow \mathbb{R}^{n+1}$ is said to be a \emph{centro-affine hypersurface} if the position vector $x$ (from origin $o$) for each point $x\in M$ is transversal to the tangent plane of $M$ at $x$. In this case $S=I$ and $\tau=0$.
  If $h$ is nondegenerate (that is $h$ defines a semi-Rie\-man\-nian metric on $M$), we say that the hypersurface or the
hypersurface immersion is \emph{nondegenerate}. In this paper we assume that $f$ is
always nondegenerate.  We have the following
\begin{tw}[\cite{NS}, Fundamental equations]\label{tw::FundamentalEquations}
For an arbitrary transversal vector field $C$ the induced
connection $\nabla$, the second fundamental form $h$, the shape
operator $S$ and the 1-form $\tau$ satisfy
the following equations:
\begin{align}
\label{eq::Gauss}&R(X,Y)Z=h(Y,Z)SX-h(X,Z)SY,\\
\label{eq::Codazzih}&(\nabla_X h)(Y,Z)+\tau(X)h(Y,Z)=(\nabla_Y h)(X,Z)+\tau(Y)h(X,Z),\\
\label{eq::CodazziS}&(\nabla_X S)(Y)-\tau(X)SY=(\nabla_Y S)(X)-\tau(Y)SX,\\
\label{eq::Ricci}&h(X,SY)-h(SX,Y)=2d\tau(X,Y).
\end{align}
\end{tw}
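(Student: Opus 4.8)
The plan is to exploit the single structural fact that the ambient connection $\D$ on $\R^{n+1}$ is flat, so its curvature tensor vanishes identically. All four equations emerge from writing out this vanishing curvature in two cases---applied to a pushed-forward tangent field $f_\ast Z$ and applied to the transversal field $C$---and then splitting the resulting identity into its tangential and transversal components via the decomposition $T\R^{n+1} = f_\ast(TM) \oplus \R C$.

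First I would apply flatness to $f_\ast Z$, i.e.\ compute $\D_X\D_Y(f_\ast Z) - \D_Y\D_X(f_\ast Z) - \D_{[X,Y]}(f_\ast Z) = 0$. Using the formula of Gauss \eqref{eq::FormulaGaussa} to expand $\D_Y(f_\ast Z) = f_\ast(\nabla_Y Z) + h(Y,Z)C$, then differentiating again and invoking the formula of Weingarten \eqref{eq::FormulaWeingartena} to handle the arising $\D_X C$ term, I would collect coefficients. The tangential part, after recognizing $R(X,Y)Z = \nabla_X\nabla_Y Z - \nabla_Y\nabla_X Z - \nabla_{[X,Y]}Z$, yields exactly the Gauss equation \eqref{eq::Gauss}. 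The transversal part, after rewriting the ordinary directional derivatives $X(h(Y,Z))$ through the definition $(\nabla_X h)(Y,Z) = X(h(Y,Z)) - h(\nabla_X Y, Z) - h(Y, \nabla_X Z)$ and using that $\nabla$ is torsion-free to cancel the residual $h(\nabla_X Y - \nabla_Y X - [X,Y], Z)$ term, yields the Codazzi equation \eqref{eq::Codazzih}.

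Next I would apply flatness to $C$, computing $\D_X\D_Y C - \D_Y\D_X C - \D_{[X,Y]}C = 0$. Expanding with the formula of Weingarten and, for the resulting $\D_X(f_\ast SY)$ term, the formula of Gauss, I again split into components. Using $\nabla_X(SY) = (\nabla_X S)(Y) + S(\nabla_X Y)$ together with the torsion-free property, the tangential part collapses to the Codazzi equation for $S$, \eqref{eq::CodazziS}. The transversal part produces $h(X,SY) - h(SX,Y) = X(\tau(Y)) - Y(\tau(X)) - \tau([X,Y])$, which is the Ricci equation \eqref{eq::Ricci} once the right-hand side is identified with $2\,d\tau(X,Y)$ via the intrinsic formula for the exterior derivative of a one-form, and once the symmetry of $h$ is invoked.

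The computation is entirely mechanical, so there is no conceptual obstacle here; the only real care needed is bookkeeping---cleanly separating tangential from transversal pieces at each stage and consistently converting the ordinary derivatives of the tensorial coefficients into covariant ones. The torsion-freeness of $\nabla$ is the lever that is used crucially and repeatedly, since it is precisely what discards the terms $\nabla_X Y - \nabla_Y X - [X,Y]$ that would otherwise spoil the symmetric form of the two Codazzi identities.
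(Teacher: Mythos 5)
Your proposal is correct and is precisely the standard derivation of the fundamental equations: the paper states this theorem as a cited result from \cite{NS} without reproducing a proof, and the argument in that reference is exactly yours (flatness of $\D$ applied to $f_\ast Z$ and to $C$, followed by splitting into tangential and transversal components and using torsion-freeness of $\nabla$). All four equations fall out as you describe, including the factor $2$ in the Ricci equation under the convention $d\tau(X,Y)=\tfrac12\bigl(X(\tau(Y))-Y(\tau(X))-\tau([X,Y])\bigr)$.
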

The equations (\ref{eq::Gauss}), (\ref{eq::Codazzih}),
(\ref{eq::CodazziS}), and (\ref{eq::Ricci}) are called the
equations of Gauss, Codazzi for $h$, Codazzi for $S$ and Ricci,
respectively.
\par When $f$ is nondegenerate, there exists a canonical transversal vector field $C$ called \emph{the affine normal field} (or \emph{the Blaschke field}).
The affine normal field is uniquely determined up to sign by the following conditions:
\begin{enumerate}
 \item the metric volume form $\omega_h$ of $h$ is $\nabla$-parallel,
 \item $\omega _h$ coincides with the induced volume form $\Theta$.
\end{enumerate} Recall that  $\omega_h$ is defined by
$$\omega_h(X_1,\ldots,X_n)=|\det[h(X_i,X_j)]|^{1/2},$$ where $\{X_1,\ldots ,X_n\}$ is any positively oriented basis relative to the induced volume form $\Theta$. The affine immersion $f$ with a Blaschke field $C$ is called \emph{ a Blaschke hypersurface}.
In this case fundamental equations can be rewritten as follows
\begin{tw}[\cite{NS}, Fundamental equations]\label{tw::FundamentalEquationsBlaschke}
For a Blaschke hypersurface $f$, we have the following fundamental equations:
\begin{align*}
&R(X,Y)Z=h(Y,Z)SX-h(X,Z)SY,\\
&(\nabla_X h)(Y,Z)=(\nabla_Y h)(X,Z),\\
&(\nabla_X S)(Y)=(\nabla_Y S)(X),\\
&h(X,SY)=h(SX,Y).
\end{align*}
\end{tw}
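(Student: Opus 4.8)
The plan is to obtain the four Blaschke equations directly from the general fundamental equations of Theorem~\ref{tw::FundamentalEquations} by proving that the Blaschke field $C$ is equiaffine, that is $\tau=0$. Indeed, once $\tau=0$ is known, the Gauss equation \eqref{eq::Gauss} is unchanged, the two Codazzi equations \eqref{eq::Codazzih} and \eqref{eq::CodazziS} lose their $\tau$-terms, and the Ricci equation \eqref{eq::Ricci} reduces to $h(X,SY)=h(SX,Y)$ because $\tau=0$ forces $d\tau=0$. Thus the entire content of the theorem is the equiaffinity of the affine normal field.

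To establish $\tau=0$ I would first record how the induced connection $\nabla$ acts on the induced volume element $\Theta$. Fixing a local frame $X_1,\dots,X_n$ on $M$, I would differentiate $\Theta(X_1,\dots,X_n)=\det[f_\ast X_1,\dots,f_\ast X_n,C]$ along a vector field $X$ and expand using the Leibniz rule for the determinant. Substituting the Gauss formula \eqref{eq::FormulaGaussa} for each $\D_X f_\ast X_i$ and the Weingarten formula \eqref{eq::FormulaWeingartena} for $\D_X C$, the term $h(X,X_i)C$ drops out because the last slot already holds $C$, while the term $-f_\ast(SX)$ drops out because $f_\ast(SX)$ is tangent and hence a linear combination of $f_\ast X_1,\dots,f_\ast X_n$, making the corresponding determinant vanish. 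What survives is exactly $\sum_i \Theta(X_1,\dots,\nabla_X X_i,\dots,X_n)+\tau(X)\,\Theta(X_1,\dots,X_n)$, which after subtracting the frame-correction terms yields the identity $\nabla_X\Theta=\tau(X)\,\Theta$.

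With this identity in hand the conclusion is immediate. By the two defining conditions of the Blaschke field, $\Theta$ coincides with the metric volume form $\omega_h$ and $\omega_h$ is $\nabla$-parallel, so $\nabla\Theta=0$. Since $\Theta$ is nowhere zero, the identity $\nabla_X\Theta=\tau(X)\,\Theta$ forces $\tau(X)=0$ for every $X\in\XM$. Substituting $\tau=0$ (and therefore $d\tau=0$) into the equations \eqref{eq::Gauss}--\eqref{eq::Ricci} then produces the four displayed identities. I expect the only genuine work to lie in the volume-form computation of the second step; once $\nabla_X\Theta=\tau(X)\,\Theta$ is verified, deducing $\tau=0$ and reading off the equations is routine.
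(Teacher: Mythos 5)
Your proposal is correct: the paper itself gives no proof (it cites the result from Nomizu--Sasaki), and your argument --- deriving $\nabla_X\Theta=\tau(X)\,\Theta$ from the Gauss and Weingarten formulas, invoking the two defining conditions of the affine normal to conclude $\tau=0$, and then specializing Theorem~\ref{tw::FundamentalEquations} --- is exactly the standard one from that reference. No gaps; the volume-form computation and the nonvanishing of $\Theta$ are handled correctly.
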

A Blaschke hypersurface is called \emph{an affine hypersphere} if $S=\lambda I$, where $\lambda =\const.$\\
If $\lambda =0$ $f$ is called \emph{an improper affine hypersphere}, if $\lambda \neq 0$ a hypersurface $f$ is called \emph{a proper affine hypersphere.}
\par Now, we will recall a notion of para-complex affine hypersurfaces, for details we refer to \cite{SZ}. More information on para-complex geometry one may found for example in \cite{CFG} and \cite{AB}.
\par Let $g\colon M^{2n}\rightarrow\R^{2n+2}$ be an immersion and let $\Jt$ be the standard para-complex structure on $\R^{2n+2}$. That is
$$
\Jt(x_1,\ldots,x_{n+1},y_1,\ldots,y_{n+1}):=(y_1,\ldots,y_{n+1},x_1,\ldots,x_{n+1}).
$$
We always identify $(\R^{2n+2},\Jt)$ with $\Ct^{n+1}$.
\par Assume now that $g_\ast(TM)$ is $\Jt$-invariant and  $\Jt|_{g_\ast(T_xM)}$ is a para-complex structure on $g_\ast(T_xM)$ for every $x\in M$.
Then $\Jt$ induces an almost para-complex structure on $M$, which we will also denote by $\Jt$. Moreover, since $(\R^{2n+2},\Jt)$ is para-complex then $(M,\Jt)$ is para-complex as well.
By assumption we have that $dg\circ \Jt=\Jt\circ dg$ that is $g\colon M^{2n}\rightarrow\R^{2n+2}\cong\Ct^{n+1}$ is a para-holomorphic immersion. Since para-complex dimension of $M$ is $n$, immersion $g$
is called a \emph{para-holomorphic hypersurface}.
\par Let $g\colon M^{2n}\rightarrow\R^{2n+2}$ be an affine hypersurface of codimension 2 with a transversal bundle $\mathcal{N}$. If $g$ is para-holomorphic then it is  called
 \emph{affine para-holomorphic hypersurface}. If additionally the transversal bundle $\mathcal{N}$ is $\Jt$-in\-var\-iant then $g$ is called a \emph{para-complex affine hypersurface}.
\par Let $g\colon M^{2n}\rightarrow \R^{2n+2}$ be a para-holomorphic hypersurface. We say that $g$ is \emph{para-complex centro-affine hypersurface}  if $\{g,\Jt g\}$ is a transversal bundle for $g$.
\begin{tw}[\cite{SZ}]
Let $g\colon M^{2n}\rightarrow \R^{2n+2}$ be a para-holomorphic hypersurface. Then for every $x\in M$ there exists a neighborhood $U$ of $x$ and
a transversal vector field $\zeta\colon U\rightarrow\R^{2n+2}$ such that $\{\zeta,\Jt\zeta\}$ is a transversal bundle for $g|_U$. That is $g|_U$ considered with $\{\zeta,\Jt\zeta\}$ is a para-complex affine hypersurface.
\end{tw}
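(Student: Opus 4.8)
The plan is to reduce the statement to a pointwise linear-algebra question on the two-dimensional normal quotient, to settle it there using the para-complex hypothesis, and finally to propagate the answer to a neighborhood by openness of transversality. First I would fix $x\in M$ and note that, since $g_\ast(T_xM)$ is $\Jt$-invariant, $\Jt$ descends to an involution $\overline{\Jt}$ of the two-dimensional quotient $V_x:=\R^{2n+2}/g_\ast(T_xM)$, satisfying $\overline{\Jt}{}^2=\id$. A vector $\zeta$ is transversal at $x$ exactly when its class $\overline{\zeta}\in V_x$ is nonzero, and $\{\zeta,\Jt\zeta\}$ is a transversal complement precisely when $\overline{\zeta}$ and $\overline{\Jt}\,\overline{\zeta}$ form a basis of $V_x$. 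So the whole problem reduces to showing that $\overline{\Jt}$ is \emph{not} a scalar multiple of $\id$, i.e.\ that its eigenvalues are $+1$ and $-1$; once that is known, any $\overline{\zeta}$ with nonzero components along both eigenlines will work.

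The key step is the dimension count coming from the para-complex hypothesis. Let $E_+$ and $E_-$ denote the $(\pm1)$-eigenspaces of $\Jt$ on $\R^{2n+2}$, each of dimension $n+1$. The $(\pm1)$-eigenspaces of the para-complex structure $\Jt|_{g_\ast(T_xM)}$ are exactly $g_\ast(T_xM)\cap E_+$ and $g_\ast(T_xM)\cap E_-$, and by the very definition of a para-complex structure each of these has dimension $n$. In particular neither $E_+$ nor $E_-$ is contained in $g_\ast(T_xM)$, so I can choose $v_+\in E_+$ and $v_-\in E_-$ with $v_\pm\notin g_\ast(T_xM)$. Their classes $\overline{v_+}$ and $\overline{v_-}$ are then nonzero and lie in the $(+1)$- and $(-1)$-eigenspaces of $\overline{\Jt}$ respectively; as eigenvectors for distinct eigenvalues they are linearly independent and span $V_x$. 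Hence $\overline{\Jt}$ has eigenvalues $+1$ and $-1$, as required.

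It then suffices to put $\zeta:=v_++v_-$, regarded as a constant $\R^{2n+2}$-valued vector field on a neighborhood of $x$ (so that $\Jt\zeta=v_+-v_-$ is constant as well, and smoothness is automatic). Since $\Span\{\zeta,\Jt\zeta\}=\Span\{v_+,v_-\}$, the pair $\{\zeta,\Jt\zeta\}$ is transversal to $g_\ast(T_xM)$ at $x$ by the previous paragraph. Transversality is the nonvanishing of the determinant $\det[g_\ast X_1,\dots,g_\ast X_{2n},\zeta,\Jt\zeta]$, which depends smoothly on the base point, so it persists on an open neighborhood $U$ of $x$. On $U$ the bundle $\{\zeta,\Jt\zeta\}$ is transversal and $\Jt$-invariant, so $g|_U$ together with $\{\zeta,\Jt\zeta\}$ is a para-complex affine hypersurface. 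The only genuine content—and the step I would be most careful about—is the dimension count of the second paragraph: it is precisely the para-complex condition $\dim\bigl(g_\ast(T_xM)\cap E_\pm\bigr)=n<n+1$ that excludes the degenerate case $\overline{\Jt}=\pm\id$, in which $\Jt\zeta$ would always be parallel to $\zeta$ modulo the tangent space and no admissible $\zeta$ could exist.
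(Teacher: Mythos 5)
Your proof is correct: the reduction to the two-dimensional quotient $V_x=\R^{2n+2}/g_\ast(T_xM)$, the dimension count $\dim\bigl(g_\ast(T_xM)\cap E_\pm\bigr)=n<n+1=\dim E_\pm$ producing $v_\pm\in E_\pm\setminus g_\ast(T_xM)$, the choice $\zeta=v_++v_-$ as a constant field, and the openness of the transversality determinant together give exactly what is claimed. Note that the paper only cites this theorem from \cite{SZ} without reproducing its proof, so there is no in-text argument to compare against; your eigenspace argument is the natural one and matches the standard approach for such existence statements.
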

Now let $g\colon M^{2n}\rightarrow \R^{2n+2}$ be a para-holomorphic hypersurface and let $\zeta\colon U\rightarrow\R^{2n+2}$ be a local transversal vector field on $U\subset M$ such that
$\{\zeta,\Jt\zeta\}$ is a transversal bundle to $g$. For all tangent vector fields $X,Y\in\X(U)$ we can decompose $D_XY$ and $D_X\zeta$ into tangent and transversal part. Namely, we have
\begin{align*}
\D_Xg_{\ast}Y&=g_{\ast}(\nabla _XY)+h_1(X,Y)\zeta +h_2(X,Y)\Jt\zeta \quad\textrm{(formula of Gauss)}, \\
\D_X\zeta &=-g_{\ast}(SX)+\tau _1(X)\zeta +\tau _2(X)\Jt\zeta \quad\textrm{(formula of Weingarten)},
\end{align*}
where $\nabla$ is a torsion free affine connection on $U$, $h_1$ and $h _2$ are symmetric bilinear forms on $U$, $S$ is a $(1,1)$-tensor field on $U$ and $\tau _1$ and $\tau _2$ are $1$-forms on $U$.
We have the following relations between  $h_1$ and $h_2$
\begin{lm}[\cite{LS},\cite {SZ}]\label{lm:oh1h2}
\begin{align}
\label{eq::h1h2}h_1(X,\Jt Y)&=h_1(\Jt X,Y)=h_2(X,Y),\\
\label{eq::h2h1}h_2(X,\Jt Y)&=h_1(X,Y).
\end{align}
\end{lm}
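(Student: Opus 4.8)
The plan is to differentiate the para-holomorphicity relation $g_\ast(\Jt Y)=\Jt g_\ast Y$ and compare the outcome with the Gauss formula stated above. Three structural facts will do all the work: (i) $\Jt$ is a para-complex structure, so $\Jt^2=\id$; (ii) $g$ is para-holomorphic, so $g_\ast(\Jt Y)=\Jt g_\ast Y$ for every $Y\in\X(U)$; and (iii) $\Jt$ is a \emph{constant} linear endomorphism of $\R^{2n+2}$, whence the flat connection $\D$ commutes with it, $\D_X(\Jt V)=\Jt\,\D_X V$.

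First I would apply $\Jt$ to the Gauss formula and push it through using (ii), (iii) and $\Jt^2\zeta=\zeta$ from (i):
\[
\D_X g_\ast(\Jt Y)=\Jt\,\D_X g_\ast Y=g_\ast(\Jt\nabla_X Y)+h_1(X,Y)\Jt\zeta+h_2(X,Y)\zeta .
\]
On the other hand, applying the Gauss formula directly to the pair $(X,\Jt Y)$ gives
\[
\D_X g_\ast(\Jt Y)=g_\ast(\nabla_X\Jt Y)+h_1(X,\Jt Y)\zeta+h_2(X,\Jt Y)\Jt\zeta .
\]
Since $\{\zeta,\Jt\zeta\}$ is a transversal bundle, the splitting of $\R^{2n+2}$ into $g_\ast(T_xM)$, $\Span(\zeta)$ and $\Span(\Jt\zeta)$ is direct, so the three components of these two expressions must coincide. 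Matching the $\zeta$-component yields $h_1(X,\Jt Y)=h_2(X,Y)$ (the second equality of \eqref{eq::h1h2}), and matching the $\Jt\zeta$-component yields $h_2(X,\Jt Y)=h_1(X,Y)$, which is exactly \eqref{eq::h2h1}.

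It remains to obtain the first equality $h_1(\Jt X,Y)=h_2(X,Y)$ in \eqref{eq::h1h2}. For this I would interchange $X$ and $Y$ in the identity $h_1(X,\Jt Y)=h_2(X,Y)$ just proved, obtaining $h_1(Y,\Jt X)=h_2(Y,X)$, and then invoke the symmetry of the bilinear forms $h_1$ and $h_2$ to rewrite this as $h_1(\Jt X,Y)=h_2(X,Y)$. I do not anticipate a genuine obstacle; the only step requiring care is the comparison of components, which is legitimate precisely because $\{\zeta,\Jt\zeta\}$ is transversal and the decomposition is therefore unique. As a harmless byproduct, matching the tangential components also gives $\nabla_X\Jt Y=\Jt\nabla_X Y$, i.e. the induced almost para-complex structure is $\nabla$-parallel, although this is not needed for the lemma.
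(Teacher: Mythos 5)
Your proof is correct and is exactly the standard argument for this lemma: apply the constant endomorphism $\Jt$ to the Gauss formula, use para-holomorphicity and $\Jt^2=\id$, and compare components in the direct decomposition determined by the transversal bundle $\{\zeta,\Jt\zeta\}$, finishing with the symmetry of $h_1,h_2$. The paper itself does not reprove this statement (it cites \cite{LS} and \cite{SZ}), but your derivation matches the proof given there in substance.
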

\par On $U$ we define the volume form $\theta _{\zeta}$ by the formula
$$
\theta _{\zeta}(X_1,\ldots ,X_{2n}):=\det (g_\ast X_1,\ldots ,g_\ast X_{2n},\zeta ,\Jt \zeta)
$$
for tangent vectors $X_i$, $i=1,\ldots,2n$. Let us consider the function $H_{\zeta}$ on $U$ defined by
$$
H_{\zeta}:=\det[h_1(X_i,X_j)]_{i,j=1\ldots 2n},
$$
where $X_1, \ldots ,X_{2n}$ is a local basis on $TU$ such that $\theta _{\zeta}(X_1,\ldots ,X_{2n})=1$. This definition is independent of the choice of basis.
We say that a hypersurface is \emph{nondegenerate} if $h_1$ (and in consequence $h_2$) is nondegenerate.
\par When $g$ is nondegenerate there exist transversal vector fields $\zeta$ satisfying the following two conditions:
\begin{align*}
|H_{\zeta}|=1,\\
\tau _1=0.
\end{align*}
Such vector fields are called \emph{affine normal vector fields}. 
 In \cite{SZ} we showed that on every para-holomorphic hypersurface we may find (at least locally) an affine normal vector field.
\par A nondegenerate para-complex hypersurface is said to be a \emph{proper para-complex affine hypersphere} if there exists an affine normal vector field $\zeta$ such
that $S=\alpha I$, where $\alpha\in \R \setminus \{0\}$ and $\tau _2=0$. If there exists an affine normal vector field $\zeta$ such that $S=0$
and $\tau _2=0$ we say about an \emph{improper para-complex affine hypersphere}.
Note that the above definition is very analogous to the definition of complex affine hypersphere introduced by F. Dillen, L. Vrancken and L. Verstraelen in \cite{DVV}.

\section{Almost paracontact structures}
\par Let $\dim M=2n+1$ and $f\colon M\rightarrow \R^{2n+2}$ be
a nondegenerate (relative to the second fundamental form) affine hypersurface. We always assume that $\R^{2n+2}$ is endowed with
the standard para-complex structure $\widetilde{J}$
$$
\widetilde{J}(x_1,\ldots,x_{n+1},y_1,\ldots,y_{n+1})=(y_1,\ldots,y_{n+1},x_1,\ldots,x_{n+1}).
$$
Let $C$ be a transversal vector field on $M$. We say that $C$ is
\emph{$\widetilde{J}$-tangent} if $\widetilde{J}C_x\in f_\ast(T_xM)$ for every $x\in M$.
We also define a distribution $\DD$ on $M$ as the biggest $\widetilde{J}$-invariant distribution on $M$, that is
$$
\DD_x=f_\ast^{-1}(f_\ast(T_xM)\cap \widetilde{J}(f_\ast(T_xM)))
$$
for every $x\in M$. We have that $\dim\DD _x\geq 2n$. If for some $x$ the $\dim\DD _x=2n+1$ then $\DD _x=T_xM$ and it is not possible to find a $\widetilde{J}$-tangent transversal vector field in a neighbourhood of $x$. Since we only study hypersurfaces with a $\widetilde{J}$-tangent transversal vector field, then we always have $\dim\DD=2n$. The distribution $\DD$ is smooth as an intersection of two smooth distributions and because $\dim \DD$ is constant.  A vector field $X$ is called a \emph{$\DD$-field}
if $X_x\in\DD_x$ for every $x\in M$. We use the notation $X\in\DD$ for vectors as well as for $\DD$-fields.
We say that the distribution $\DD$ is nondegenerate if
$h$ is nondegenerate on $\DD$.
\par A $(2n+1)$-dimensional manifold $M$ is said to have an
\emph{almost paracontact structure} if there exist on $M$ a tensor
field $\varphi$ of type (1,1), a vector field $\xi$ and a 1-form
$\eta$ which satisfy
\begin{align}
\varphi^2(X)&=X-\eta(X)\xi,\\
\eta(\xi)&=1
\end{align}
for every $X\in TM$
and the tensor field $\varphi$ induces an almost para-complex structure on the distribution $\DD=\operatorname{ker}\eta$. That is the eigendistributions $\DD ^{+},\DD ^{-}$ corresponding to the eigenvalues $1,-1$
of $\varphi$ have equal dimension $n$.
\par Let $f\colon M\rightarrow \R^{2n+2}$ be a nondegenerate affine
hypersurface with a $\widetilde{J}$-tangent transversal vector field $C$. Then
we can define a vector field $\xi$, a 1-form $\eta$ and a tensor field
$\varphi$ of type (1,1) as follows:
\begin{align}
&\xi:=\widetilde{J}C;\\
\label{etanaD::eq::0}&\eta|_\DD=0 \text{ and } \eta(\xi)=1; \\
&\varphi|_\DD=\widetilde{J}|_\DD \text{ and } \varphi(\xi)=0.
\end{align}
It is easy to see that $(\varphi,\xi,\eta)$ is an almost paracontact
structure on $M$. This structure is called the \emph{induced almost
paracontact structure}.
For an induced almost paracontact structure we have the following theorem
\begin{tw}[\cite{SZZ}]\label{tw::Wzory}
Let $f\colon M\rightarrow \mathbb{R}^{2n+2}$ be an affine hypersurface with a $\widetilde{J}$-tangent transversal vector field $C$.
If $(\varphi,\xi,\eta)$ is an induced almost paracontact structure on $M$
then the following equations hold:
\begin{align}
\label{Wzory::eq::1}&\eta(\nabla_XY)=h(X,\varphi Y)+X(\eta(Y))+\eta(Y)\tau(X),\\
\label{Wzory::eq::2}&\varphi(\nabla_XY)=\nabla_X\varphi Y-\eta(Y)SX-h(X,Y)\xi,\\
\label{Wzory::eq::3}&\eta([X,Y])=h(X,\varphi Y)-h(Y,\varphi X)+X(\eta(Y))-Y(\eta(X))\\
\nonumber &\qquad\qquad\quad+\eta(Y)\tau(X)-\eta(X)\tau(Y),\\
\label{Wzory::eq::4}&\varphi([X,Y])=\nabla_X\varphi Y-\nabla_Y\varphi X+\eta(X)SY-\eta(Y)SX,\\
\label{Wzory::eq::5}&\eta(\nabla_X\xi)=\tau(X),\\
\label{Wzory::eq::6}&\eta(SX)=-h(X,\xi)
\end{align}
for every $X,Y\in \X(M)$.
\end{tw}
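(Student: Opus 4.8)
The plan is to reduce all six identities to a single master relation describing how the para-complex structure $\Jt$ acts on the image $f_\ast(TM)$, and then to read off each formula by differentiating this relation and separating the part tangent to $f_\ast(TM)$ from the component along $C$.

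First I would establish, for every $Y\in\X(M)$, the pointwise identity
\[
\Jt(f_\ast Y)=f_\ast(\varphi Y)+\eta(Y)C.
\]
To prove it, decompose $Y=(Y-\eta(Y)\xi)+\eta(Y)\xi$, where $Y-\eta(Y)\xi\in\DD=\operatorname{ker}\eta$. On $\DD$ the definition of $\varphi$ gives $\Jt\circ f_\ast=f_\ast\circ\varphi$, while $f_\ast\xi=\Jt C$ together with $\Jt^2=\id$ yields $\Jt(f_\ast\xi)=C$; using $\varphi\xi=0$ and recombining the two pieces produces the displayed formula. This identity is the conceptual core of the theorem: it records that $\Jt$ need not preserve $f_\ast(TM)$, and that the obstruction is precisely the $C$-component $\eta(Y)C$.

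Next I would differentiate the master relation along $X$. Since $\Jt$ has constant coefficients it commutes with the flat connection $\D$, so applying $\D_X$ to both sides, substituting the Gauss formula \eqref{eq::FormulaGaussa} and the Weingarten formula \eqref{eq::FormulaWeingartena}, and re-applying the master relation to $\nabla_XY$, I obtain an identity in $\R^{2n+2}$. Comparing the parts tangent to $f_\ast(TM)$ yields \eqref{Wzory::eq::2}, while comparing the coefficients of $C$ yields \eqref{Wzory::eq::1}. The remaining formulas are then formal consequences: antisymmetrizing \eqref{Wzory::eq::1} and \eqref{Wzory::eq::2} in $X,Y$ and using $\nabla_XY-\nabla_YX=[X,Y]$ together with the symmetry of $h$ gives \eqref{Wzory::eq::3} and \eqref{Wzory::eq::4} respectively; setting $Y=\xi$ in \eqref{Wzory::eq::1} and using $\varphi\xi=0$, $\eta(\xi)=1$ gives \eqref{Wzory::eq::5}. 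Finally, \eqref{Wzory::eq::6} follows by differentiating $f_\ast\xi=\Jt C$ directly: after applying $\D_X$, the Weingarten formula, and the master relation, the $C$-component is exactly $\eta(SX)=-h(X,\xi)$ (and the tangential component yields as a bonus $\nabla_X\xi=-\varphi SX+\tau(X)\xi$).

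The only genuinely delicate step is the master relation, and within the subsequent computations the main thing to watch is the tangential/transversal bookkeeping: one must use $\Jt^2=\id$ (rather than $-\id$, since $\Jt$ is para-complex) and the fact that $\varphi$ maps $TM$ into $\operatorname{ker}\eta$, so that $\eta\circ\varphi=0$. Once the master relation is in place, everything reduces to linear algebra in the splitting $\R^{2n+2}\cong f_\ast(TM)\oplus\R C$, and I expect no further obstacle.
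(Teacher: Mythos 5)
Your proposal is correct and complete: the master relation $\Jt(f_\ast Y)=f_\ast(\varphi Y)+\eta(Y)C$ follows exactly as you describe from the decomposition $Y=(Y-\eta(Y)\xi)+\eta(Y)\xi$, from $f_\ast\xi=\Jt C$ and from $\Jt^2=\id$, and differentiating it (together with $f_\ast\xi=\Jt C$) and separating tangential and $C$-components does yield \eqref{Wzory::eq::1}, \eqref{Wzory::eq::2} and \eqref{Wzory::eq::6}, with \eqref{Wzory::eq::3}--\eqref{Wzory::eq::5} as the formal consequences you indicate. Note that the present paper gives no proof of this theorem --- it is quoted from \cite{SZZ} --- so there is nothing in the text to compare against; your argument is the natural one and fills that gap correctly.
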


\section{$\widetilde{J}$-tangent affine hyperspheres}
An affine hypersphere with a transversal $\Jt$-tangent Blaschke field we call \emph{a $\Jt$-tangent affine hypersphere}. We start this section with the following useful lemma related to differential equations
\begin{lm}\label{lm::Differential-Equation}
Let $F\colon I\rightarrow\R^{2n}$ be a smooth function on an interval $I$.
If $F$ satisfies the differential equation
\begin{equation}\label{eq::RR}
F'(z)=-\widetilde{J}F(z),
\end{equation}
then $F$ is of the form
\begin{equation}\label{eq::RR-solution}
F(z)=\widetilde{J}v\cosh z-v\sinh z,
\end{equation}
where $v\in\R^{2n}$.
\end{lm}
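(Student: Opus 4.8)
The plan is to read \eqref{eq::RR} as a linear first-order ordinary differential equation with constant coefficient operator $-\Jt$, and to exploit the one algebraic fact that distinguishes the para-complex situation from the complex one: $\Jt$ is an involution, $\Jt^2=\id$. First I would record this identity, which is immediate from the coordinate description of $\Jt$ (it merely interchanges the two blocks of coordinates); it is precisely what forces the hyperbolic functions to appear in \eqref{eq::RR-solution}, in place of the trigonometric ones that a genuine complex structure would produce.

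The cleanest route is then to note that the solution of \eqref{eq::RR} with initial value $F(0)$ is unique and equals $F(z)=e^{-z\Jt}F(0)$. Splitting the exponential series into its even and odd powers and using $\Jt^2=\id$ gives the closed form
\[
e^{-z\Jt}=\cosh z\,\id-\sinh z\,\Jt ,
\]
so that $F(z)=\cosh z\,F(0)-\sinh z\,\Jt F(0)$. Setting $v:=\Jt F(0)$ and using $\Jt v=\Jt^2 F(0)=F(0)$ rewrites this expression as $\Jt v\cosh z-v\sinh z$, which is exactly \eqref{eq::RR-solution}. The key point is that a single vector $v$ simultaneously supplies both the $\cosh$ and the $\sinh$ terms, and this is what the involutivity of $\Jt$ guarantees.

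If one prefers to avoid the matrix exponential, an equivalent argument diagonalises $\Jt$. Since $\Jt^2=\id$, the space $\R^{2n}$ splits as the direct sum of eigenspaces $V^{\pm}=\ker(\Jt\mp\id)$, and writing $F=F^{+}+F^{-}$ accordingly decouples \eqref{eq::RR} into $(F^{+})'=-F^{+}$ and $(F^{-})'=F^{-}$, with solutions $e^{-z}F^{+}(0)$ and $e^{z}F^{-}(0)$. Recombining these through the identities $e^{\pm z}=\cosh z\pm\sinh z$ yields the same closed form and the same choice $v=\Jt F(0)$.

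I do not expect a genuine obstacle here: the statement is essentially the explicit integration of a constant-coefficient linear ODE, and the only point requiring care is the bookkeeping of signs in the hyperbolic identities together with checking that the one initial vector $v$ produces both terms. Indeed the whole argument can be compressed to a verification: substitute \eqref{eq::RR-solution} into \eqref{eq::RR}, confirm with $\Jt^2=\id$ that it solves the equation, observe that at $z=0$ it reduces to $\Jt v$, and invoke uniqueness of solutions of linear ordinary differential equations with prescribed initial value.
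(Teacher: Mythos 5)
Your proposal is correct and follows essentially the same route as the paper, which simply verifies that \eqref{eq::RR-solution} solves \eqref{eq::RR} and invokes uniqueness of solutions (Picard--Lindel\"of) for this constant-coefficient linear ODE. Your explicit computation $e^{-z\Jt}=\cosh z\,\id-\sinh z\,\Jt$ and the choice $v=\Jt F(0)$ merely make precise the point, left implicit in the paper, that every initial value is realised by some $v$.
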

\begin{proof}
It is not difficult to check that functions of the form (\ref{eq::RR-solution}) satisfy the differential equation (\ref{eq::RR}).
On the other hand, since (\ref{eq::RR}) is a first-order ordinary differential equation, the Picard-Lindel\"{o}f theorem implies that any solution of
(\ref{eq::RR}) must be of the form (\ref{eq::RR-solution}).
\end{proof}
Using the above lemma, we can prove the following theorem
\begin{tw}\label{tw::local-form-of-centroaffine}
Let $f\colon M\rightarrow\R^{2n+2}$ be a centro-affine hypersurface with a $\widetilde{J}$-tangent centro-affine vector field. Then there exist an open subset $U\subset\R^{2n}$,
an interval $I\subset\R$ and an immersion $g\colon U\rightarrow\R^{2n+2}$ such that $f$ can be locally expressed in the form
\begin{equation}\label{eq::local-form-of-centroaffine}
f(x_1,\ldots ,x_{2n},z)=\widetilde{J}g(x_1,\ldots ,x_{2n})\cosh z-g(x_1,\ldots ,x_{2n})\sinh z
\end{equation}
for all $(x_1,\ldots,x_{2n},z)\in U\times I$.
\end{tw}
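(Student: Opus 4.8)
The plan is to build the parametrization out of the flow of the tangent vector field $\xi=\widetilde{J}C$, and to recognize its integral curves (read in $\R^{2n+2}$) as the solutions of the ODE \eqref{eq::RR} from Lemma~\ref{lm::Differential-Equation}. The flow parameter will supply the variable $z$, while a transversal slice of dimension $2n$ will supply the variables $x_1,\ldots,x_{2n}$. Since $C$ is $\widetilde{J}$-tangent, $\xi=\widetilde{J}C$ is a genuine tangent field on $M$, and it is nowhere zero because $C$ is transversal (hence nonzero) and $\widetilde{J}$ is invertible.

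First I would fix the centro-affine normalization. As $f$ is centro-affine we have $S=I$ and $\tau=0$, so the Weingarten formula \eqref{eq::FormulaWeingartena} reads $\D_X C=-f_\ast X$. Comparing this with $\D_X f=f_\ast X$ (the directional derivative of the $\R^{2n+2}$-valued position map $f$) forces $C=-f$, the inward position vector. Hence $\xi=\widetilde{J}C=-\widetilde{J}f$, i.e.\ $f_\ast\xi=-\widetilde{J}f$ along $f$.

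Next I would analyze the integral curves. Fix $p\in M$, let $\gamma(z)$ be the integral curve of $\xi$ with $\gamma(0)=p$, and set $c(z):=f(\gamma(z))$. Then $c'(z)=f_\ast\xi_{\gamma(z)}=-\widetilde{J}f(\gamma(z))=-\widetilde{J}c(z)$, so $c$ satisfies \eqref{eq::RR}. By Lemma~\ref{lm::Differential-Equation} (applied in $\R^{2n+2}$) we get $c(z)=\widetilde{J}v\cosh z-v\sinh z$ with $v=\widetilde{J}c(0)=\widetilde{J}f(p)$, using $\widetilde{J}^2=\id$; equivalently $c(z)=c(0)\cosh z-\widetilde{J}c(0)\sinh z$. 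Since $\xi$ is nonvanishing, the flow-box theorem then provides, near any point, a $2n$-dimensional submanifold $N\subset M$ transversal to $\xi$, a parametrization $\psi\colon U\to N$ with $U\subset\R^{2n}$ open, and an interval $I$ so that $\Psi(x,z):=\Phi_z(\psi(x))$ (with $\Phi$ the flow of $\xi$) is a diffeomorphism onto a neighbourhood of the point. Setting $g(x):=\widetilde{J}f(\psi(x))$ and applying the computation above to $z\mapsto f(\Psi(x,z))$ with initial point $\psi(x)$ yields $f(\Psi(x,z))=\widetilde{J}g(x)\cosh z-g(x)\sinh z$, which is \eqref{eq::local-form-of-centroaffine}. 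That $g$ is an immersion is immediate, as $g=\widetilde{J}\circ(f|_N)\circ\psi$ composes the parametrization $\psi$, the immersion $f|_N$, and the linear isomorphism $\widetilde{J}$.

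The main obstacle is not a single calculation but the bookkeeping of the flow-box construction: one must choose the transversal slice $N$ and the initial normalization so that the constant vector $v$ of the Lemma becomes exactly $\widetilde{J}g(x)$, yielding the stated split form rather than its $\widetilde{J}$-transform, and one must confirm that the resulting $g$ is truly an immersion into $\R^{2n+2}$ (of codimension two). Once the identifications $C=-f$ and $\xi=\widetilde{J}C$ are in place, the ODE step and the final matching are short.
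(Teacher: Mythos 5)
Your proposal is correct and follows essentially the same route as the paper: identify $C=-f$ from the centro-affine normalization, straighten the nonvanishing tangent field $\widetilde{J}C$ via the flow-box theorem to obtain coordinates $(x_1,\ldots,x_{2n},z)$ with $f_z=-\widetilde{J}f$, and apply Lemma~\ref{lm::Differential-Equation} along the integral curves. Your version merely makes explicit the bookkeeping (the transversal slice, the identification $v=\widetilde{J}f(\psi(x))$, and why $g$ is an immersion) that the paper's shorter proof leaves implicit.
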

\begin{proof}
Denote $C:=-f$. Since $f$ is a centro-affine hypersurface with a $\widetilde{J}$-tangent transversal vector field then we have $\widetilde{J}C=-\widetilde{J}f\in f_{\ast}(TM)$.
Therefore, for every $x\in M$, there exists a neighborhood $V$ of $x$ and a map $\psi(x_1,\ldots,x_{2n},z)$ on $V$ such that
$$
f_{\ast}\pszm{z}=\widetilde{J}C.
$$
That is $f$ can be locally expressed in the form $f(x_1,\ldots ,x_{2n},z),$ where $f_z=-\widetilde{J}f$. Now using Lemma \ref{lm::Differential-Equation} we obtain the thesis.
\end{proof}

\par When the distribution $\DD$ is involutive we have

\begin{tw}\label{tw::Ivolutivity-of-D-for-centroaffine}
Let $f\colon M\rightarrow \R^{2n+2}$ be an affine hypersurface with a centro-affine $\Jt$-tangent vector field $C=-\overrightarrow{of}$.
If the distribution $\DD$ is involutive then for every $x\in M$ there exists a para-complex centro-affine immersion $g\colon V\rightarrow \R^{2n+2}$ defined on
an open subset $V\subset \R^{2n}$ such that $f$ can be expressed in the neighborhood of $x$ in the form
\begin{equation}\label{eq::f}
f(x_1,\ldots,x_{2n},z)=\Jt g(x_1,\ldots,x_{2n})\cosh z-g(x_1,\ldots,x_{2n})\sinh z.
\end{equation}
Moreover, if $g\colon V\rightarrow \R^{2n+2}$ is a para-complex centro-affine immersion then $f$ given by the formula {\rm(\ref{eq::f})} is an affine hypersurface with a centro-affine $\Jt$-tangent vector field and an involutive distribution $\DD$.
\end{tw}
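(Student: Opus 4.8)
The plan is to prove the two implications separately, organised around the tangent vector field $\xi:=\Jt C=-\Jt f$ and the splitting of $TM$ it induces. Since $C$ is centro-affine and $\Jt$-tangent, $\xi$ is a well-defined nonvanishing tangent vector field; and because $\Jt\xi=-f=C$ is transversal while $\DD$ is $\Jt$-invariant, one gets $\xi\notin\DD$, so counting dimensions yields the splitting $TM=\DD\oplus\Span\{\xi\}$. This splitting is the backbone of both directions: $\xi$ governs the ``$z$-direction'' through the equation $f_z=-\Jt f$, while $\DD$ is the ``$x$-direction''.

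For the forward implication I would not re-run Theorem \ref{tw::local-form-of-centroaffine} blindly, but instead exploit involutivity to choose the initial slice well. By Frobenius, the involutive distribution $\DD$ admits through a given point a $2n$-dimensional integral submanifold $N_0$, and since $TN_0=\DD$ is transverse to $\xi$, flowing $N_0$ along $\xi$ produces local coordinates $(x_1,\dots,x_{2n},z)$ with $f_\ast\pszm{z}=\xi$ and $\{z=0\}=N_0$. The decisive observation is that $g_0:=f|_{N_0}$ is para-complex centro-affine: its tangent space equals $f_\ast(\DD)$, on which $\Jt$ restricts to the induced almost para-complex structure $\varphi|_\DD$ with equal $\pm1$-eigenspaces, so $g_0$ is para-holomorphic; and the pair $\{f,\Jt f\}$ is transversal to $f_\ast(\DD)$, since $f=-C\notin f_\ast(TM)$ while $\Jt f=-f_\ast\xi$ with $\xi\notin\DD$, which is a short linear-algebra check producing a direct sum $\R^{2n+2}=f_\ast(\DD)\oplus\Span\{f,\Jt f\}$. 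Setting $g:=\Jt g_0$ (again para-complex centro-affine, as $\Jt$ preserves both the $\Jt$-invariant tangent space and the transversal plane $\Span\{g_0,\Jt g_0\}$) and applying Lemma \ref{lm::Differential-Equation} along each integral curve of $\xi$ with initial condition $f(x,0)=\Jt g(x)$ yields exactly the form (\ref{eq::f}).

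For the converse I would start from a para-complex centro-affine immersion $g$, so that $W:=g_\ast(TV)$ is a $2n$-dimensional $\Jt$-invariant subspace and $\{g,\Jt g\}$ spans a complementary transversal plane, and simply analyse the $f$ defined by (\ref{eq::f}). Differentiating gives $f_{x_i}=\cosh z\,\Jt g_{x_i}-\sinh z\,g_{x_i}\in W$ and $f_z=-\Jt f\in\Span\{g,\Jt g\}$. The operator $\cosh z\,\Jt-\sinh z\,\id$ restricted to $W$ has eigenvalues $\cosh z-\sinh z=e^{-z}$ and $-\cosh z-\sinh z=-e^{z}$ on the $\pm1$-eigenspaces of $\Jt|_W$, both nonzero, hence it is invertible and the $f_{x_i}$ span $W$; consequently $f_\ast(T_pM)=W\oplus\Span\{\Jt f\}$ is $(2n{+}1)$-dimensional, $f$ is an immersion, and $C=-f$ is transversal with $\Jt C=-\Jt f=f_z\in f_\ast(T_pM)$, i.e. centro-affine and $\Jt$-tangent. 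Finally one computes $\Jt f_\ast(T_pM)=W\oplus\Span\{f\}$, whence the maximal $\Jt$-invariant subspace is $f_\ast(T_pM)\cap\Jt f_\ast(T_pM)=W$, so $\DD=f_\ast^{-1}(W)=\Span\{\pszm{x_1},\dots,\pszm{x_{2n}}\}$; being spanned by commuting coordinate fields, $\DD$ is involutive.

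The main obstacle is the forward implication, and specifically the point that makes involutivity indispensable: an arbitrary transversal slice $\Sigma$ to the $\xi$-flow produces a $g$ that is generally not para-holomorphic, because $f|_\Sigma$ has $\Jt$-invariant tangent space precisely when $T\Sigma=\DD$, i.e. when $\Sigma$ is an integral manifold of $\DD$. Thus the heart of the argument is to use Frobenius to manufacture such a leaf and then to verify, by the linear-algebra transversality argument above, that the restriction of $f$ to it is centro-affine. The remaining steps — the ODE integration via Lemma \ref{lm::Differential-Equation} and the computations in the converse — are routine once the splitting $TM=\DD\oplus\Span\{\xi\}$ and the identification $\DD=f_\ast^{-1}(W)$ are in place.
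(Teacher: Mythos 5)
Your proof is correct, and your converse half coincides in substance with the paper's. In the forward direction, however, you take a genuinely different route to the adapted chart. The paper applies Frobenius on all of $M$ to get a commuting frame $X_1,\dots,X_{2n},X_{2n+1}=\xi$, writes $X_i=D_i+\alpha_i\xi$ with $D_i\in\DD$, and invokes the structure equations (\ref{Wzory::eq::3}) and (\ref{Wzory::eq::6}) to prove $[D_i,\xi]=0$ and $\xi(\alpha_i)=0$, so that involutivity of $\DD$ yields a chart with $\pszm{x_i}=D_i\in\DD$ for \emph{all} $z$; para-holomorphicity of $g$ is then read off from $f_{x_i}\in f_\ast(\DD)$ together with the explicit formula. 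You instead apply Frobenius to $\DD$ itself, take an integral leaf $N_0$, flow it along $\xi$, and identify $g$ (up to composition with $\Jt$) as $f|_{N_0}$, verifying that it is para-complex centro-affine by linear algebra carried out entirely on the slice $z=0$; Lemma \ref{lm::Differential-Equation} then propagates the formula in $z$. Your route avoids the structure equations altogether and never needs the coordinate fields to remain in $\DD$ away from the initial leaf (a point your flow-out construction would not in fact guarantee a priori, but which you correctly never use), so it is slightly more economical; the paper's route has the side benefit of producing a frame of commuting $\DD$-fields valid for all $z$. Both arguments rest on the same two pillars: $\xi\notin\DD$, giving the splitting $TM=\DD\oplus\Span\{\xi\}$, and the ODE lemma.
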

\begin{proof}
Let $(\varphi ,\xi ,\eta)$ be an induced almost paracontact structure on $M$ induced by $C$.
The Frobenius theorem implies that for every $x\in M$ there exist an open neighborhood $U\subset M$ of $x$ and linearly independent vector fields $X_1,\ldots,X_{2n},X_{2n+1}=\xi\in\X(U)$
such that $[X_i,X_j]=0$ for $i,j=1,\ldots,2n+1$.
For every $i=1,\ldots,2n$ we have $X_i=D_i+\alpha_i\xi$ where $D_i\in\DD$ and $\alpha_i\in\Cniesk(U)$. Thus we have
$$
0=[X_i,\xi]=[D_i,\xi]-\xi(\alpha_i)\xi.
$$
Now (\ref{Wzory::eq::3}) and (\ref{Wzory::eq::6}) imply that $[D_i,\xi]$ and $\xi(\alpha_i)=0$. We also have
$$
0=[X_i,X_j]=[D_i,D_j]-D_j(\alpha_i)\xi+D_i(\alpha_j)\xi
$$
for $i=1,\ldots,2n$. Since $\DD$ is involutive the above equality implies that $[D_i,D_j]=0$ for $i,j=1,\ldots,2n$. Of course the vector fields
$D_1,\ldots,D_{2n},\xi$ are linearly independent, so there exists a map $\psi(x_1,\ldots,x_{2n},z)$ on $U$ such that
$$
\pszm{z}=\xi,\quad \pszm{x_i}=D_i,\quad \text{$i=1,\ldots,2n.$}
$$
Now applying Lemma \ref{lm::Differential-Equation} we find that $f$ can be locally expressed in the form
$$
f(x_1,\ldots ,x_{2n},z)=\widetilde{J}g(x_1,\ldots ,x_{2n})\cosh z-g(x_1,\ldots ,x_{2n})\sinh z,
$$
where $g\colon V\rightarrow \R^{2n+2}$ is an immersion defined on an open subset $V\subset \R^{2n}$.
Moreover, since $\pszm{x_i}\in\DD$ we have that
$$
f_{x_i}=\widetilde{J}g_{x_i}\cosh z - g_{x_i}\sinh z\in f_{\ast}(D).
$$
Since $f_{\ast}(D)$ is $\Jt$-invariant we also have
$$
\Jt f_{x_i}=g_{x_i}\cosh z - \Jt g_{x_i}\sinh z \in f_{\ast}(D).
$$
The above implies that $g_{x_i}\in f_\ast(D)$ for $i=1,\ldots,2n$.
Since $\{g_{x_i}\}$ are linearly independent, they form a basis of $f_{\ast}(D)$ (note that $\dim f_{\ast}(D)=2n$) i. e.
$$
f_{\ast}(D)=\operatorname{span}\{g_{x_1},\ldots,g_{x_{2n}}\}.
$$
Since $f_{\ast}(D)$ is $\Jt$-invariant  we also have that
$$
\Jt g_{x_i}\in f_{\ast}(D)=\operatorname{span}\{g_{x_1},\ldots,g_{x_{2n}}\}.
$$
That is, $\Jt g_{x_i}=\sum\alpha_ig_{x_i}$, where $\alpha_i\in\Cniesk(U)$. Since $g$ does not depend on variable $z$, the functions $\alpha_i$ also do not, thus $\alpha_i\in\Cniesk(V)$.
\par In this way we have shown that for $g\colon V\rightarrow \R^{2n+2}$ the tangent space $TV$ is $\Jt$-invariant (we can transfer $\Jt$ from $g_\ast(TV)$ to $TV$). Since
$\Jt|_{f_\ast(D)}$ is para-complex and $f_\ast(D)=\operatorname{span}_{\Cniesk(U)}\{g_{x_1},\ldots,g_{x_{2n}}\}$, $\Jt$ is a para-complex structure on $TV$.
Finally $g$ is para-holomorphic. Since $f$ is an immersion, $\{g_{x_1},\ldots,g_{x_{2n}},\Jt g\}$ are linearly independent. Moreover, because $f$ is centro-affine, we also have that $g$
is linearly independent with $\{g_{x_1},\ldots,g_{x_{2n}},\Jt g\}$. That is $\{g,\Jt g\}$ is a $\Jt$-invariant transversal bundle for $g_\ast(TV)$ and, in consequence, $g$ is a para-complex affine immersion.
\par In order to prove the second part of the theorem, note that since $g$ is a centro-affine para-complex affine immersion, then
$\{f_{x_1},\ldots,f_{x_{2n}},f_{z},f\}$ are linearly independent. It means that $f$ is an immersion and is centro-affine. Moreover, $f$ is $\Jt$-tangent since
$\Jt(-\overrightarrow{of})=-g\cosh z +\Jt g\sinh z=f_z$. In particular, $g$ is para-holomorphic. That is, we have $\Jt g_{x_i}=\sum_{j=1}^{2n}\alpha_{ij}g_{x_j}$ for $i=1,\ldots,2n$.
Now, by straightforward computations we get $\sum_{j=1}^{2n}\alpha_{ij}f_{x_j}=\Jt f_{x_i}$ for $i=1,\ldots,2n$. That is, $\Jt f_{x_i}\in\operatorname{span}\{f_{x_1},\ldots,f_{x_{2n}}\}$.
In this way we have shown that $\operatorname{span}\{f_{x_1},\ldots,f_{x_{2n}}\}$ is $\Jt$-invariant and since its dimension is $2n$ it must be equal to $f_\ast(D)$.
Now it is easy to see that $\DD=\{\pszm{x_1},\ldots,\pszm{x_{2n}}\}$ is involutive as generated by the canonical vector fields.
\end{proof}
For $\Jt$-tangent affine hyperspheres we have the following classification theorems:
\begin{tw}\label{tw::NoImpropSph}
There are no improper $\widetilde{J}$-tangent affine hyperspheres.
\end{tw}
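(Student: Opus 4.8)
The plan is to argue by contradiction, using a single one of the structural equations from Theorem \ref{tw::Wzory} together with the standing nondegeneracy hypothesis. Suppose, then, that $f$ is an improper $\widetilde{J}$-tangent affine hypersphere. By the definition of an affine hypersphere its shape operator is $S=\lambda I$ with $\lambda=\const$, and \emph{improper} means precisely $\lambda=0$; hence $S=0$ identically on $M$. The induced almost paracontact structure $(\varphi,\xi,\eta)$ is the one determined by the Blaschke field $C$, so in particular $\xi=\widetilde{J}C$.

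The key step is to feed $S=0$ into equation (\ref{Wzory::eq::6}), namely $\eta(SX)=-h(X,\xi)$. Since $SX=0$ for every $X\in\X(M)$, the left-hand side vanishes identically, and we obtain $h(X,\xi)=0$ for all tangent $X$. In other words, the characteristic vector field $\xi$ lies in the radical of the second fundamental form $h$. (Note that being a Blaschke field, $C$ is equiaffine, so $\tau=0$; but in fact equation (\ref{Wzory::eq::6}) already carries no $\tau$-term, so this observation is not even needed here.)

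It then remains only to check that $\xi$ is a genuine nonzero tangent vector field, so that its membership in the radical actually witnesses degeneracy of $h$: indeed $C$ is a nowhere-vanishing transversal field and $\widetilde{J}$ is invertible (as $\widetilde{J}^2=\id$), so $\xi=\widetilde{J}C$ never vanishes, while $\widetilde{J}$-tangency guarantees $\xi\in f_\ast(TM)$. Thus the radical of $h$ is nontrivial, i.e.\ $h$ is degenerate, contradicting the standing assumption that $f$ is nondegenerate. Consequently no improper $\widetilde{J}$-tangent affine hypersphere can exist. I do not anticipate any real obstacle in this argument: the whole content is carried by equation (\ref{Wzory::eq::6}), which was constructed precisely to tie together $S$, $\eta$ and $h$ along the direction $\xi$; the only point deserving a word of care is the verification that $\xi$ is nonzero and tangent, and that is immediate.
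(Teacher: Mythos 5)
Your argument is correct and is essentially identical to the paper's own proof: both substitute $S=0$ into equation (\ref{Wzory::eq::6}) to get $h(X,\xi)=0$ for all $X$ and conclude that this contradicts the nondegeneracy of $h$. Your additional remark that $\xi=\widetilde{J}C$ is a nowhere-vanishing tangent field is a worthwhile explicit check that the paper leaves implicit.
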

\begin{proof}
By (\ref{Wzory::eq::6}) we have $\eta (SX)=-h(X,\xi)$ for all $X\in \X(M)$. Since $S=0$ we have $h(X,\xi)=0$ for every $X\in \X(M)$, which contradicts nondegeneracy of $h$.
\end{proof}


\begin{tw}\label{tw::ParacomplexHyperspheresWithInvolutiveDistribution}
Let $f\colon M\rightarrow \R ^{2n+2}$ be a $\Jt$-tangent affine hypersphere with an involutive distribution $\DD$. Then $f$ can be locally expressed in the form:
\begin{align}
\label{eq::Jhypersphere} f(x_1,\ldots ,x_{2n},z)=\Jt g(x_1,\ldots ,x_{2n})\cosh z-g(x_1,\ldots ,x_{2n})\sinh z,
\end{align}
where $g$ is a proper para-complex affine hypersphere. Moreover, the converse is also true in the sense that if $g$ is a proper para-complex affine hypersphere then $f$ given by the formula {\rm(\ref{eq::Jhypersphere})} is a $\Jt$-tangent affine hypersphere with an involutive distribution $\DD$.
\end{tw}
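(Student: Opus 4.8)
The plan is to leverage Theorem \ref{tw::Ivolutivity-of-D-for-centroaffine}, which already establishes the correspondence between centro-affine $\Jt$-tangent immersions with involutive $\DD$ and para-complex centro-affine immersions $g$ via the formula (\ref{eq::f}). The key additional content here is upgrading ``centro-affine'' to the full affine-hypersphere condition on both sides: on the $f$-side we need $C = -\overrightarrow{of}$ to actually be the Blaschke field (so that $S = \lambda I$ with $\lambda = \const$), and on the $g$-side we need $g$ to be a \emph{proper} para-complex affine hypersphere (so that the shape operator of $g$ equals $\alpha I$ with $\alpha \neq 0$, with vanishing $\tau_1, \tau_2$). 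First I would observe that since $f$ is an affine hypersphere, its Blaschke field must be $\Jt$-tangent and, by the normalization conditions defining the Blaschke field together with $S = \lambda I$, it can be taken as the centro-affine normal $C = -f$ up to constant scaling; this places us exactly in the hypothesis of Theorem \ref{tw::Ivolutivity-of-D-for-centroaffine}, giving the local form (\ref{eq::Jhypersphere}) with $g$ a para-complex centro-affine immersion.

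The heart of the proof is then a computation relating the affine-differential data of $f$ to that of $g$. I would differentiate (\ref{eq::Jhypersphere}) twice and decompose the second derivatives along the frame $\{g_{x_1}, \dots, g_{x_{2n}}, \Jt g, g\}$, reading off the induced connection $\nabla$, second fundamental form $h$, shape operator $S$, and transversal form $\tau$ for $f$ in terms of the corresponding objects $\nabla^g, h_1, h_2, S^g, \tau_1, \tau_2$ for $g$. Because $f_z = \xi = \Jt C$ and $C = -f$ satisfies $\D_X C = -f_\ast X$ (the centro-affine relation $S=I$, $\tau=0$ for the position vector), the $\xi$-direction behaves rigidly: the equations (\ref{Wzory::eq::1})--(\ref{Wzory::eq::6}) of Theorem \ref{tw::Wzory} pin down $h(\cdot,\xi)$, $\nabla_X \xi$, and the $\eta$-components of $\nabla$. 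The $\DD$-direction computation is where the Gauss/Weingarten formulas for the codimension-two immersion $g$ (with its relations (\ref{eq::h1h2})--(\ref{eq::h2h1}) between $h_1$ and $h_2$) enter, and I expect the induced $h$ on $\DD$ to be expressed through $h_1$ and $h_2$ with explicit $\cosh z, \sinh z$ factors.

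The main obstacle, and the step deserving the most care, is matching the normalization conditions: the Blaschke field of $f$ is characterized by $|H| = 1$ (metric volume equals induced volume) and $\tau = 0$, whereas the affine normal field of $g$ is characterized by $|H_\zeta| = 1$ and $\tau_1 = 0$. I would need to verify that the $z$-dependent factors $\cosh z, \sinh z$ appearing in the induced volume form $\Theta$ of $f$ and in the determinant $\det[h(X_i,X_j)]$ conspire (via the identity $\cosh^2 z - \sinh^2 z = 1$ inherited from the para-complex structure) so that the Blaschke normalization for $f$ is equivalent to the affine-normal normalization for $g$; this is precisely the point at which the para-complex structure forces things to work out. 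Simultaneously I must check that $S = \lambda I$ with $\lambda$ constant for $f$ translates exactly into $S^g = \alpha I$ with $\alpha \neq 0$ (properness) and $\tau_2 = 0$ for $g$ — the nonvanishing of $\alpha$ being guaranteed by Theorem \ref{tw::NoImpropSph}, since an improper $\Jt$-tangent affine hypersphere cannot exist. For the converse, I would run the same computation in reverse: starting from a proper para-complex affine hypersphere $g$, the formula (\ref{eq::Jhypersphere}) produces an immersion $f$ which by the second half of Theorem \ref{tw::Ivolutivity-of-D-for-centroaffine} is centro-affine, $\Jt$-tangent, with involutive $\DD$; then the same volume and shape-operator identities, now read forward, show that $C = -f$ is the Blaschke field and $S = \lambda I$ with constant $\lambda$, so $f$ is an affine hypersphere.
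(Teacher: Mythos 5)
Your proposal follows essentially the same route as the paper: properness via Theorem \ref{tw::NoImpropSph}, reduction to the centro-affine normal $-f$ so that Theorem \ref{tw::Ivolutivity-of-D-for-centroaffine} yields the form (\ref{eq::Jhypersphere}) with $g$ para-complex centro-affine, and then a Gauss--Weingarten computation matching the Blaschke normalization of $f$ with the affine-normal normalization of a suitably rescaled $\zeta=\const\cdot g$ (the paper takes $\zeta=-|\lambda|^{\frac{2n+3}{2n+4}}g$ and verifies $|H_\zeta|=1$ by the determinant and volume-form identities you anticipate), with the converse run in reverse exactly as you describe. What remains in your plan is only the explicit bookkeeping of the scaling exponent and the determinant computation, which is precisely what the paper's proof supplies.
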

\begin{proof}
($\Rightarrow$) First note that due to Theorem \ref{tw::NoImpropSph} $f$ must be a proper affine hypersphere. Let $C$ be a $\Jt$-tangent affine normal field. There exists $\lambda \in \mathbb{R}\setminus \{0\}$ such that $C=-\lambda f$. Since $C$ is $\Jt$-tangent and transversal the same is $\frac{1}{\lambda}C=-f$. That is, $f$ satisfies assumptions of Theorem \ref{tw::Ivolutivity-of-D-for-centroaffine}. By Theorem \ref{tw::Ivolutivity-of-D-for-centroaffine}
there exists a para-complex centro-affine immersion $g\colon V\rightarrow \R^{2n+2}$ defined on
an open subset $V\subset \R^{2n}$ and there exists an open interval $I$ such that $f$ can be locally expressed in the form
\begin{equation}
f(x_1,\ldots,x_{2n},z)=\Jt g(x_1,\ldots,x_{2n})\cosh z-g(x_1,\ldots,x_{2n})\sinh z
\end{equation}
for $(x_1,\ldots ,x_{2n})\in V$ and $z\in I$.
\par Let $\zeta :=-|\lambda|^{\frac{2n+3}{2n+4}}g$. Bundle $\{\zeta, \Jt \zeta\}$ is transversal to $g$, because $g$ is para-complex centro-affine. Let $\nabla, h_1, h_2, S, \tau_1, \tau_2$ be induced objects on $V$ by $\zeta$.
Using the Weingarten formula for $g$ and $\zeta$ we get
$$
\D_{\partial _{x_i}}\zeta =-g_{\ast}(S\partial _{x_i})+\tau _1(\partial _{x_i})\zeta +\tau _2(\partial _{x_i})J\zeta .
$$
On the other hand, by straightforward computations we have
$$
\D_{\partial _{x_i}}\zeta=\partial _{x_i}(\zeta)=-|\lambda|^{\frac{2n+3}{2n+4}}g_{\ast}(\partial _{x_i}).
$$
Thus, we obtain
\begin{align}
\label{eq::Stau1tau2} S=|\lambda|^{\frac{2n+3}{2n+4}}I,\quad \tau _1=0, \quad \tau_2=0.
\end{align}
Now, it is enough to show that $\zeta$ is an affine normal vector field that is $|H_{\zeta}|=1.$
Since $g$ is para-holomorphic, without loss of generality, we may assume that
$$
\partial_{x_{n+i}}=\Jt\partial _{x_i}
$$
for $i=1,\ldots, n.$ Let $h$ be the second fundamental form for $f$.

Using similar methods like in the proof of Theorem 4.1 from \cite{SZ2} one may compute


$$
-\lambda h(\partial _{x_i},\partial _{x_j})=-|\lambda|^{\frac{2n+3}{2n+4}}h_1(\partial _{x_i},\partial _{x_j}),\quad h(\partial _z,\partial _z)=-\frac{1}{\lambda}
$$
and
$$
h(\partial _z,\partial _{x_i})=h(\partial _{x_i},\partial _z)=0
$$
for $i,j=1,\ldots, 2n.$ Let us denote
$$
a:=\theta _{\zeta}(\partial _{x_1},\ldots ,\partial _{x_n},\Jt\partial _{x_1},\ldots ,\Jt\partial _{x_n}).
$$
Then we have
\begin{align*}
\det h:&=\det
\left[
\begin{matrix}
h(\partial _{x_1},\partial _{x_1}) & h(\partial _{x_1},\partial _{x_2}) & \cdots & h(\partial _{x_1},\partial _{x_{2n}}) & 0 \\
h(\partial _{x_2},\partial _{x_1}) & h(\partial _{x_2},\partial _{x_2}) & \cdots & h(\partial _{x_2},\partial _{x_{2n}}) & 0 \\
\vdots & \vdots & \ddots & \vdots & \vdots  \\
h(\partial _{x_{2n}},\partial _{x_1}) & h(\partial _{x_{2n}},\partial _{x_2}) & \cdots & h(\partial _{x_{2n}},\partial _{x_{2n}}) & 0 \\
    0   &    0  &    \cdots & 0 & -\frac{1}{\lambda}
\end{matrix}\right]\\
&=-\frac{1}{\lambda}\det [h(\partial _{x_i},\partial _{x_j})]=-\frac{1}{\lambda}\cdot (\frac{1}{\lambda}\cdot |\lambda|^{\frac{2n+3}{2n+4}} )^{2n}\det [h_1(\partial _{x_i},\partial _{x_j})]\\
&=-\frac{1}{\lambda}\cdot |\lambda|^{-\frac{2n}{2n+4}}a^2 H_\zeta
\end{align*}
and
\begin{align}\label{eq::OmegaH2}
(\omega_h)^2=|\det h|=|\lambda|^{\frac{-2n-2}{n+2}}a^2|H_\zeta|.
\end{align}
Again by similar computation like in \cite{SZ2} we get

\begin{align*}
\omega _h
&=-\lambda \cdot (|\lambda|^{-\frac{2n+3}{n+2}})\cdot (-1)^{n+1}\theta _{\zeta}(\partial _{x_1},\ldots ,\partial _{x_{2n}})=(-1)^{n+2}\cdot \lambda \cdot (|\lambda|^{-\frac{2n+3}{n+2}})a.
\end{align*}
Using the above formula in (\ref{eq::OmegaH2}) we easily obtain
$$
|H_\zeta|=a^{-2}|\lambda|^{\frac{2n+2}{n+2}}\cdot \lambda^2 \cdot |\lambda|^{-\frac{4n+6}{n+2}}a^2=1.
$$

($\Leftarrow $) Let $g\colon U \rightarrow \mathbb{R}^{2n+2}$ be a proper para-complex affine hypersphere.
Since $g$ is a proper para-complex affine hypersphere there exists $\alpha \neq 0$ such that $\zeta =-\alpha g$ is an affine normal vector field. Without loss of generality we may assume that $\alpha >0$. Since both, $g$ and $\Jt g$ are transversal, we see that $\{g_{x_1},\ldots ,g_{x_{2n}},g,\Jt g\}$ forms the basis of $\mathbb{R}^{2n+2}$. The above implies that $$f\colon U\times I\ni (x_1,\ldots ,x_{2n},z)\mapsto f(x_1,\ldots ,x_{2n},z)\in \mathbb{R}^{2n+2}$$ given by the formula:
$$
f(x_1,\ldots ,x_{2n},z):=\Jt g(x_1,\ldots ,x_{2n})\cosh z-g(x_1,\ldots ,x_{2n})\sinh z
$$ is an immersion and $C:=-\alpha ^{\frac{2n+4}{2n+3}}\cdot f$ is a transversal vector field.
The field $C$ is $\Jt$-tangent because $\Jt C=\alpha ^{\frac{2n+4}{2n+3}}f_z$. Since $C$ is equiaffine and $S=\alpha ^{\frac{2n+4}{2n+3}}I$ it is enough to show that $\omega _h=\theta $ for some positively oriented (relative to $\theta$) basis on $U\times I$. Let $\partial _{x_1},\ldots ,\partial _{x_{2n}},\partial _z$ be a local coordinate system on $U\times I$. Since $g$ is para-holomorphic we may assume that $\partial _{x_{n+i}}=\Jt\partial _{x_i}$ for $i=1,\ldots, n$.

Then we have
\begin{align*}
\theta (\partial _{x_1},&\ldots ,\partial _{x_{2n}},\partial _z)
=-\alpha ^{-\frac{2n+2}{2n+3}}\cdot (-1)^{n+1}\theta _{\zeta}(\partial _{x_1},\ldots ,\partial _{x_{2n}}).
\end{align*}
That is
\begin{align}\label{eq::tetazeta}
\theta _{\zeta}(\partial _{x_1},\ldots ,\partial _{x_{2n}})=(-1)^{n}\alpha ^{\frac{2n+2}{2n+3}}\cdot \theta (\partial _{x_1},&\ldots ,\partial _{x_{2n}},\partial _z).
\end{align}
In a similar way as in the proof of the first implication we compute
\begin{align*}
\det h&=-\alpha ^{-\frac{2n+4}{2n+3}}\cdot \Big(\frac{\alpha}{\alpha ^{\frac{2n+4}{2n+3}}}\Big)^{2n}\det h_1\\
&=-\alpha ^{-\frac{2n+4}{2n+3}}\cdot \alpha ^{-\frac{2n}{2n+3}}\det h_1\\
&=-\alpha ^{\frac{-4n-4}{2n+3}}\det h_1.
\end{align*}
The above implies that
$$
(\omega_h)^2=|\det h|=\alpha ^{\frac{-4n-4}{2n+3}}|\det h_1|.
$$
Since
$$
|\det h_1|=|H_{\zeta}|[\theta _{\zeta}(\partial _{x_1},\ldots ,\partial _{x_{2n}})]^2,
$$
we obtain
$$
(\omega_h)^2=\alpha ^{\frac{-4n-4}{2n+3}}|H_{\zeta}|[\theta _{\zeta}(\partial _{x_1},\ldots ,\partial _{x_{2n}})]^2.
$$
Finally, using the fact that $|H_{\zeta}|=1$ and (\ref{eq::tetazeta}), we get $$\omega _h=|\theta (\partial _{x_1},\ldots ,\partial _{x_{2n}},\partial _z)|.$$ The proof is completed.
\end{proof}

Immediately from the proof of the above theorem we get
\begin{wn}
If $f$ is a $\Jt$-tangent affine hypersphere with the shape operator $S=\lambda \id$ and $g$ is a para-complex affine hypersphere (related to $f$) with the shape operator $\widetilde{S}=\alpha \id$ then $\lambda$ and $\alpha$ are related by the following formula:
$$|\lambda|=|\alpha|^{\frac{2n+4}{2n+3}}.$$
\end{wn}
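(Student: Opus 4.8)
The plan is to read the relation between the two shape operators directly off the construction already performed in the proof of Theorem~\ref{tw::ParacomplexHyperspheresWithInvolutiveDistribution}; no new geometric input is needed, since that proof exhibits the precise constant linking the affine normal field $\zeta$ of $g$ (hence $\widetilde{S}$) to the constant $\lambda$. Either implication of that theorem supplies the identity, so I may pick whichever is more transparent.

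First I would invoke the forward implication. There, starting from a $\Jt$-tangent affine hypersphere $f$ with $S=\lambda\id$ and affine normal $C=-\lambda f$, the associated para-complex affine immersion $g$ was equipped with the affine normal field $\zeta=-|\lambda|^{\frac{2n+3}{2n+4}}g$, and formula~\eqref{eq::Stau1tau2} records that the shape operator of $g$ with respect to $\zeta$ equals $|\lambda|^{\frac{2n+3}{2n+4}}\id$. Comparing with the hypothesis $\widetilde{S}=\alpha\id$ yields at once $|\alpha|=|\lambda|^{\frac{2n+3}{2n+4}}$.

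It then remains only to invert this relation. Raising both sides to the power $\frac{2n+4}{2n+3}$ gives
$$
|\alpha|^{\frac{2n+4}{2n+3}}=|\lambda|^{\frac{2n+3}{2n+4}\cdot\frac{2n+4}{2n+3}}=|\lambda|,
$$
which is the claimed formula. As a consistency check one may instead run the reverse implication: starting from $g$ with $\widetilde{S}=\alpha\id$ one sets $C=-|\alpha|^{\frac{2n+4}{2n+3}}f$, so that $f$ acquires shape operator $|\alpha|^{\frac{2n+4}{2n+3}}\id$, which reproduces $|\lambda|=|\alpha|^{\frac{2n+4}{2n+3}}$ directly and confirms the two descriptions agree.

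The only point requiring care, and thus the sole (very mild) obstacle, is the bookkeeping of absolute values and signs: the exponents in the construction act on $|\lambda|$ rather than on $\lambda$ itself, so the clean and invariant statement is the one between $|\lambda|$ and $|\alpha|$. Correspondingly, no assertion about the relative signs of $\lambda$ and $\alpha$ is made or available. Apart from this, the corollary is a direct consequence of the exponents already computed in the proof of Theorem~\ref{tw::ParacomplexHyperspheresWithInvolutiveDistribution}, so the argument is essentially purely algebraic.
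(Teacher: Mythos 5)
Your proposal is correct and matches the paper's own justification, which is simply the remark that the corollary follows ``immediately from the proof of the above theorem'': one reads $\widetilde{S}=|\lambda|^{\frac{2n+3}{2n+4}}\id$ off the choice $\zeta=-|\lambda|^{\frac{2n+3}{2n+4}}g$ in the forward implication (or $S=\alpha^{\frac{2n+4}{2n+3}}\id$ from $C=-\alpha^{\frac{2n+4}{2n+3}}f$ in the converse) and inverts the exponent. Your remark about the absolute values being the invariant content is also consistent with how the statement is phrased.
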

Now, we shall recall a classification theorem for para-complex affine hyperspheres.
\begin{tw}[\cite{SZ}]\label{th::og}
Let $g\colon M\rightarrow \R^{2n+2}$ be a para-complex affine hypersphere with a transversal bundle $\{\zeta ,\Jt \zeta\}$. Then there exist open subsets $U_1\subset \R^{n}$, $U_2\subset \R^{n}$ and (real) affine hyperspheres
$$
f_1\colon U_1\rightarrow \R^{n+1}, \quad f_2\colon U_2\rightarrow \R^{n+1}
$$
such that $g$ can be locally expressed in the form
\begin{align}\label{f::og}
g=f_1\times f_2 +\Jt \circ (f_1\times (-f_2)).
\end{align}
Moreover, if $g$ is proper (respectively improper) then both $f_1$ and $f_2$ are proper (respectively improper) as well. The converse is also true, in the sense, that for every two proper (respectively improper) $n$-dimensional affine hyperspheres $f_1$ and $f_2$ the formula {\rm(\ref{f::og})} defines a proper (respectively improper) para-complex affine hypersphere.
\end{tw}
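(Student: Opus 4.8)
The plan is to exploit the integrability of the para-complex structure on $M$ to split every object attached to $g$ into two halves indexed by the two idempotents of $\Ct$, and to recover $f_1$ and $f_2$ as the two eigen-components of $g$.

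First I would introduce the eigenprojections $P_\pm=\tfrac{1}{2}(\id\pm\Jt)$ of the para-complex structure and the eigendistributions $\DD^\pm=\ker(\Jt\mp\id)\subset TM$. Since $(\R^{2n+2},\Jt)$ is flat and $\Jt$ is integrable, both $\DD^+$ and $\DD^-$ are involutive, so by a Frobenius-type argument one obtains product coordinates $(s_1,\dots,s_n,t_1,\dots,t_n)$ with $\partial_{s_i}\in\DD^+$ and $\partial_{t_j}\in\DD^-$. Para-holomorphicity $dg\circ\Jt=\Jt\circ dg$ then forces $\partial_{t_j}(P_+g)=0$ and $\partial_{s_i}(P_-g)=0$: indeed $\Jt g_{t_j}=-g_{t_j}$ shows $g_{t_j}$ lies in the $(-1)$-eigenspace of the ambient $\Jt$, i.e. $P_+g_{t_j}=0$. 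Writing $g=(g^{(1)},g^{(2)})$ and setting $f_1:=\tfrac{1}{2}(g^{(1)}+g^{(2)})$, $f_2:=\tfrac{1}{2}(g^{(2)}-g^{(1)})$, the projections $P_+g$ and $P_-g$ depend only on $s$ and only on $t$ respectively, which is exactly the assertion that $f_1=f_1(s)$ on some $U_1\subset\R^n$ and $f_2=f_2(t)$ on some $U_2\subset\R^n$. A direct rearrangement then gives $g=f_1\times f_2+\Jt\circ(f_1\times(-f_2))$, i.e. formula (\ref{f::og}). Because $g$ is an immersion and the $g_{s_i}$ (resp. $g_{t_j}$) are diagonal (resp. antidiagonal), the vectors $(f_1)_{s_i}$ and $(f_2)_{t_j}$ are linearly independent, so $f_1,f_2$ are genuine hypersurface immersions into $\R^{n+1}$.

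Next I would read off the affine structure of $f_1,f_2$ from that of $g$. In the proper case the Weingarten equation $\D_X\zeta=-\alpha g_\ast X$ (with $S=\alpha\,\id$, $\tau_1=\tau_2=0$) shows $\zeta+\alpha g$ is constant, so after a translation $\zeta=-\alpha g$; its eigen-components are $P_+\zeta=-\alpha(f_1,f_1)$ and $P_-\zeta=\alpha(f_2,-f_2)$. Applying $P_+$ to the Gauss formula restricted to $\DD^+$ and using Lemma \ref{lm:oh1h2} (which gives $h_2=h_1$ on $\DD^+$) yields the Gauss and Weingarten equations of $f_1$ with centro-affine transversal $-\alpha f_1$, second fundamental form $h^{(1)}=2\,h_1|_{\DD^+}$ and shape operator $\alpha\,\id$; the $P_-$-projection along $\DD^-$ (where $h_2=-h_1$) does the same for $f_2$. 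Thus each of $f_1,f_2$ is centro-affine with shape operator a constant multiple of the identity, and it remains only to verify the Blaschke volume normalization.

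This last point is where I expect the real work to lie. The relation $h_2(X,\Jt Y)=h_1(X,Y)$ forces $h_1(\DD^+,\DD^-)=0$, so $h_1$ is block diagonal and $\det[h_1(\partial_a,\partial_b)]=2^{-2n}\det h^{(1)}\det h^{(2)}$, while the diagonal/antidiagonal block structure of the frame $\{g_{s_i},g_{t_j},\zeta,\Jt\zeta\}$ factors $\theta_\zeta$ as a constant power of $\alpha$ times $\Theta^{(1)}\Theta^{(2)}$. Feeding these into $H_\zeta=\det[h_1(\partial_a,\partial_b)]/\theta_\zeta(\partial)^2$ and imposing $|H_\zeta|=1$ gives an identity of the form $\big(\omega_{h^{(1)}}/\Theta^{(1)}\big)^2\big(\omega_{h^{(2)}}/\Theta^{(2)}\big)^2=\mathrm{const}$. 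The crucial observation is that the first factor depends only on $s$ and the second only on $t$; hence by separation of variables each is \emph{separately} constant, which is precisely the statement that $-\alpha f_1$ and $-\alpha f_2$ are constant multiples of the respective Blaschke normals. Since these normals are proportional to the position vectors, $f_1$ and $f_2$ are proper affine hyperspheres, and they are proper exactly when $\alpha\neq0$; the improper case $\alpha=0$ is handled identically, except that $\D_X\zeta=0$ makes $\zeta$ a constant vector whose eigen-components serve as parallel (improper) affine normals for $f_1,f_2$. Finally, for the converse I would run every step backwards: starting from affine hyperspheres $f_1,f_2$ I define $g$ by (\ref{f::og}), check para-holomorphicity from the block structure, take $\{g,\Jt g\}$ (resp. a constant transversal bundle) as the transversal bundle, and reassemble the determinant identities to recover $S=\alpha\,\id$, $\tau_2=0$ and $|H_\zeta|=1$, so that $g$ is a proper (resp. improper) para-complex affine hypersphere.
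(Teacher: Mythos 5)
The paper offers no proof of Theorem \ref{th::og}: it is imported verbatim from \cite{SZ}, so there is nothing here to compare line by line. That said, your argument is exactly the decomposition strategy of that reference (and of the complex prototype in \cite{DVV}): split $TM$ into the integrable eigendistributions $\DD^{\pm}$ of the induced para-complex structure, use para-holomorphicity to conclude that $P_{+}g$ depends only on the $\DD^{+}$-coordinates and $P_{-}g$ only on the $\DD^{-}$-coordinates, identify $f_1,f_2$ with the two eigencomponents, read off their Gauss--Weingarten data from the block structure of $h_1,h_2$ forced by Lemma \ref{lm:oh1h2} (in particular $h_1(\DD^{+},\DD^{-})=0$), and recover the Blaschke normalization of each factor from $|H_{\zeta}|=1$ by separation of variables. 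The outline is sound and the identities you assert all check out; the only places where you state rather than carry out the work are the explicit factorization of $\theta_{\zeta}$ and $\det h_1$ into the two volume forms and the converse direction, both of which are routine reversals of your forward computations.
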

The following theorem allows us to construct $\Jt$-tangent affine hyperspheres using standard proper affine hyperspheres. Namely we have
\begin{tw}\label{tw::ParaComplexSpheresDecomposition}
Let $f\colon M\rightarrow \R ^{2n+2}$ be a $\Jt$-tangent affine hypersphere with an involutive distribution $\DD$. Then $f$ can be locally expressed in the form:
\begin{align}\label{eq::SphereDecomposition}
f(x_1,\ldots ,x_{n},y_1,\ldots ,y_n,z)=\\ \nonumber &\hspace{-5cm}\Big(\Jt \circ(f_1\times f_2)+f_1\times(-f_2)\Big)(x_1,\ldots ,x_{n},y_1,\ldots ,y_n)\cosh z\\ \nonumber&\hspace{-5cm}-\Big((f_1\times f_2)+
\Jt \circ (f_1\times(-f_2))\Big)(x_1,\ldots ,x_{n},y_1,\ldots ,y_n)\sinh z,
\end{align}
where $f_1$ and $f_2$ are proper $n$-dimensional affine hyperspheres. Moreover, the converse is also true in the sense that if $f_1$ and $f_2$ are proper $n$-dimensional affine hyperspheres then $f$ given by the above formula is a proper $\Jt$-tangent affine hypersphere with an involutive distribution $\DD$.
\end{tw}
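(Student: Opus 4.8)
The plan is to combine the two classification theorems already established, namely Theorem~\ref{tw::ParacomplexHyperspheresWithInvolutiveDistribution} and Theorem~\ref{th::og}, since the statement is really a direct composition of these two results. Theorem~\ref{tw::ParacomplexHyperspheresWithInvolutiveDistribution} provides a bijective correspondence between $\Jt$-tangent affine hyperspheres with involutive $\DD$ and proper para-complex affine hyperspheres $g$, via the formula $f = \Jt g\cosh z - g\sinh z$. Theorem~\ref{th::og} in turn decomposes any proper para-complex affine hypersphere $g$ as $g = f_1\times f_2 + \Jt\circ(f_1\times(-f_2))$ for a pair of proper $n$-dimensional affine hyperspheres $f_1, f_2$, and conversely. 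So the entire argument is a substitution followed by verification that the resulting expression matches \eqref{eq::SphereDecomposition}.

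For the forward direction, I would start from a $\Jt$-tangent affine hypersphere $f$ with involutive $\DD$. By Theorem~\ref{tw::NoImpropSph} it is proper, and by Theorem~\ref{tw::ParacomplexHyperspheresWithInvolutiveDistribution} there is an associated proper para-complex affine hypersphere $g$ with $f = \Jt g\cosh z - g\sinh z$. Then I apply Theorem~\ref{th::og} to write $g = (f_1\times f_2) + \Jt\circ(f_1\times(-f_2))$, where $f_1, f_2$ are proper (the properness of $f_1, f_2$ follows from the properness clause of Theorem~\ref{th::og}, since $g$ is proper). Substituting this expression for $g$ into the formula for $f$ yields the claimed form \eqref{eq::SphereDecomposition}; the term $\Jt g$ is computed using $\Jt^2 = \id$, so that $\Jt g = \Jt\circ(f_1\times f_2) + (f_1\times(-f_2))$. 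The converse direction runs the same chain backwards: starting from proper $f_1, f_2$, Theorem~\ref{th::og} assembles a proper para-complex affine hypersphere $g$, and then the converse part of Theorem~\ref{tw::ParacomplexHyperspheresWithInvolutiveDistribution} guarantees that the corresponding $f$ is a proper $\Jt$-tangent affine hypersphere with involutive $\DD$.

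The only genuine computational step is verifying that applying $\Jt$ to $g = (f_1\times f_2) + \Jt\circ(f_1\times(-f_2))$ and plugging into $\Jt g\cosh z - g\sinh z$ reproduces exactly the grouping displayed in \eqref{eq::SphereDecomposition}. This is routine: since $\Jt$ is an involution with $\Jt(\Jt\circ(f_1\times(-f_2))) = f_1\times(-f_2)$, the $\cosh z$ coefficient becomes $\Jt\circ(f_1\times f_2) + f_1\times(-f_2)$ and the $\sinh z$ coefficient becomes $(f_1\times f_2) + \Jt\circ(f_1\times(-f_2))$, which is precisely the stated formula. I do not anticipate any real obstacle here, because both hard theorems have already been proven and their hypotheses match up cleanly; the main point is simply to record that the two correspondences are compatible and compose to give an explicit parametrization of all such $f$ by pairs $(f_1, f_2)$ of proper $n$-dimensional affine hyperspheres.
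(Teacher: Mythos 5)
Your proposal is correct and follows exactly the paper's own argument: the paper proves this theorem in one line as an immediate consequence of Theorem~\ref{tw::ParacomplexHyperspheresWithInvolutiveDistribution} and Theorem~\ref{th::og}, which is precisely the composition you carry out. Your explicit verification that $\Jt g = \Jt\circ(f_1\times f_2)+f_1\times(-f_2)$ reproduces the grouping in \eqref{eq::SphereDecomposition} is the only computation needed, and it is done correctly.
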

\begin{proof}
The proof is an immediate consequence of Theorem \ref{tw::ParacomplexHyperspheresWithInvolutiveDistribution} and Theorem \ref{th::og}.
\end{proof}

Since the only $1$-dimensional proper affine spheres are the ellipse and hyperbola
we can obtain the complete local classification of $3$-dimensional $\Jt$-tangent affine hyperspheres with an involutive distribution $\DD$. Namely we have
\begin{tw}\label{tw::klasyfikacjaSfer3dim}
Let $f\colon M^3\rightarrow\R^{4}$ be a $\Jt$-tangent affine hypersphere with an involutive distribution $\DD$. Then up to $\Jt$-invariant affine transformation
$f$ is locally equivalent to one of the following hypersurfaces:
\begin{align}
\label{eq::3dim::1}
f_1(x,y,z)&=
\left(
\begin{matrix}
\cos x+\cos y\\
\sin x+\sin y\\
\cos x-\cos y\\
\sin x-\sin y
\end{matrix}\right)\cosh z
-\left(
\begin{matrix}
\cos x-\cos y\\
\sin x-\sin y\\
\cos x+\cos y\\
\sin x+\sin y
\end{matrix}\right)\sinh z, \\
\label{eq::3dim::2} f_2(x,y,z)&=
\left(
\begin{matrix}
\cosh x+\cosh y\\
\sinh x+\sinh y\\
\cosh x-\cosh y\\
\sinh x-\sinh y
\end{matrix}\right)\cosh z
-\left(
\begin{matrix}
\cosh x-\cosh y\\
\sinh x-\sinh y\\
\cosh x+\cosh y\\
\sinh x+\sinh y
\end{matrix}\right)\sinh z,\\
\label{eq::3dim::3}
f_3(x,y,z)&=
\left(
\begin{matrix}
\cos x+\cosh y\\
\sin x+\sinh y\\
\cos x-\cosh y\\
\sin x-\sinh y
\end{matrix}\right)\cosh z
-\left(
\begin{matrix}
\cos x-\cosh y\\
\sin x-\sinh y\\
\cos x+\cosh y\\
\sin x+\sinh y
\end{matrix}\right)\sinh z, \\
\label{eq::3dim::4} f_4(x,y,z)&=
\left(
\begin{matrix}
\cosh x+\cos y\\
\sinh x+\sin y\\
\cosh x-\cos y\\
\sinh x-\sin y
\end{matrix}\right)\cosh z
-\left(
\begin{matrix}
\cosh x-\cos y\\
\sinh x-\sin y\\
\cosh x+\cos y\\
\sinh x+\sin y
\end{matrix}\right)\sinh z.
\end{align}
\end{tw}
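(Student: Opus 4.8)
The plan is to run the construction of Theorem~\ref{tw::ParaComplexSpheresDecomposition} in the lowest admissible dimension and then insert the complete list of one-dimensional proper affine spheres. Since $\dim M=2n+1=3$ forces $n=1$, Theorem~\ref{tw::ParaComplexSpheresDecomposition} gives that, locally, $f$ is of the form~\eqref{eq::SphereDecomposition} built from two \emph{proper one-dimensional} affine hyperspheres $f_1\colon U_1\to\R^2$ and $f_2\colon U_2\to\R^2$; conversely, by the same theorem every such pair yields a $\Jt$-tangent affine hypersphere with involutive $\DD$. Hence it suffices to enumerate the admissible pairs $(f_1,f_2)$.

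Next I would recall that a one-dimensional proper affine sphere has constant nonzero affine curvature, so up to an equiaffine transformation of $\R^2$ it is either the ellipse $t\mapsto(\cos t,\sin t)$ or the hyperbola $t\mapsto(\cosh t,\sinh t)$, the parabola being the improper case and thus excluded. This leaves exactly four ordered pairs: ellipse--ellipse, hyperbola--hyperbola, ellipse--hyperbola and hyperbola--ellipse.

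For each pair I would substitute into~\eqref{eq::SphereDecomposition}. Splitting $\R^4=\R^2\oplus\R^2$ so that $\Jt$ interchanges the two summands, a direct computation gives $g:=(f_1\times f_2)+\Jt\circ(f_1\times(-f_2))=(f_1-f_2,\,f_1+f_2)$ and $\Jt g=(f_1+f_2,\,f_1-f_2)$, so that $f=\Jt g\cosh z-g\sinh z$. Feeding in the four pairs then reproduces~\eqref{eq::3dim::1} (ellipse--ellipse), \eqref{eq::3dim::2} (hyperbola--hyperbola), \eqref{eq::3dim::3} (ellipse--hyperbola) and~\eqref{eq::3dim::4} (hyperbola--ellipse), which are precisely the asserted normal forms.

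The step that requires genuine care, and which I expect to be the main obstacle, is to justify that the two normalizations, chosen independently in the two factors, can be realized simultaneously by a single $\Jt$-invariant affine transformation of the ambient $\R^4$. The factors produced by the decomposition are centro-affine spheres centered at the origin (consistent with $g$ being centro-affine), so no translations are available and only linear normalizations $f_1\mapsto Af_1$, $f_2\mapsto Bf_2$ are needed. In block form these lift to $L=\tfrac12\bigl(\begin{smallmatrix}A+B & A-B\\ A-B & A+B\end{smallmatrix}\bigr)$, which on the $\pm1$-eigenbundles of $\Jt$ acts as $A$ and $B$ respectively; in particular $L$ commutes with $\Jt$, and being $\Jt$-linear it carries $\Jt g\cosh z-g\sinh z$ to the correspondingly transformed hypersurface. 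This shows that every three-dimensional $\Jt$-tangent affine hypersphere with involutive $\DD$ is $\Jt$-invariantly affinely equivalent to one of $f_1,\dots,f_4$, completing the classification.
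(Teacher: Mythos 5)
Your proof is correct and follows essentially the same route as the paper: invoke Theorem \ref{tw::ParaComplexSpheresDecomposition} with $n=1$, reduce to the ellipse/hyperbola classification of one-dimensional proper affine spheres, and absorb the two planar normalizations into the single $\Jt$-commuting block matrix $\tfrac12\bigl(\begin{smallmatrix}A+B&A-B\\A-B&A+B\end{smallmatrix}\bigr)$, which is exactly the transformation $A$ used in the paper. (One cosmetic slip: normalizing each conic to $(\cos t,\sin t)$ or $(\cosh t,\sinh t)$ requires a general affine, not equiaffine, transformation of $\R^2$, but this does not affect the argument.)
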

\begin{proof}
By Theorem \ref{tw::ParaComplexSpheresDecomposition} $f$ can be locally obtained from $1$-dimensional proper affine spheres $f_1$ and $f_2$.
Since the only $1$-dimensional proper affine spheres are the ellipse and hyperbola then $f_i$ is affinely equivalent to either
$\gamma_1(t)=(\cos t,\sin t)$ or $\gamma_2(t)=(\cosh t,\sinh t)$. Now one may find affine transformations $P,Q$ of $\R^2$ such that
\begin{align}\label{eq::f1f2}
f_1=P\circ\gamma_{i_0} \quad\text{and}\quad f_2=Q\circ\gamma_{j_0}
\end{align}
for some $i_0,j_0\in\{1,2\}$.
Applying (\ref{eq::f1f2}) to (\ref{eq::SphereDecomposition}) we get
\begin{align*}
f(x,y,z)=
A\circ
\Bigg[\Big(\Jt \circ(\gamma_{i_0}\times \gamma_{j_0})+\gamma_{i_0}\times(-\gamma_{j_0})\Big)(x,y)\cosh z\\
-\Big((\gamma_{i_0}\times \gamma_{j_0})+\Jt \circ (\gamma_{i_0}\times(-\gamma_{j_0}))\Big)(x,y)\sinh z\Bigg]\\
=A\circ\Bigg[
\left(
\begin{matrix}
\gamma_{i_0}(x)+\gamma_{j_0}(y)\\
\gamma_{i_0}(x)-\gamma_{j_0}(y)
\end{matrix}\right)\cosh z
-\left(
\begin{matrix}
\gamma_{i_0}(x)-\gamma_{j_0}(y)\\
\gamma_{i_0}(x)+\gamma_{j_0}(y)
\end{matrix}\right)\sinh z
\Bigg]
\end{align*}
where
$$
A=\left[
\begin{matrix}
\frac{1}{2}(P+Q) & \frac{1}{2}(P-Q)\\
\frac{1}{2}(P-Q) & \frac{1}{2}(P+Q)
\end{matrix}\right].
$$
Hence $f$ is (up to $\Jt$-invariant affine transformation $A$) equivalent to
$$
f_0(x,y,z)=
\left(
\begin{matrix}
\gamma_{i_0}(x)+\gamma_{j_0}(y)\\
\gamma_{i_0}(x)-\gamma_{j_0}(y)
\end{matrix}\right)\cosh z
-\left(
\begin{matrix}
\gamma_{i_0}(x)-\gamma_{j_0}(y)\\
\gamma_{i_0}(x)+\gamma_{j_0}(y)
\end{matrix}\right)\sinh z.
$$
Now taking different combinations of $i_0,j_0$ we easily obtain (\ref{eq::3dim::1})--(\ref{eq::3dim::4}).
\end{proof}

\begin{uw} Note that (\ref{eq::3dim::3}) and (\ref{eq::3dim::4}) are affinely equivalent but the affine transformation
mapping (\ref{eq::3dim::3}) onto (\ref{eq::3dim::4}) is not $\Jt$-invariant.
\end{uw}

\begin{uw}\label{uw::Flat}
It is worth to mention that hypersurfaces from Theorem \ref{tw::klasyfikacjaSfer3dim} are flat and have parallel cubic form.
Actually they are the only proper $3$-dimensional affine hyperspheres with this property (see \cite{HL,DV2} for details).
\end{uw}

To conclude this section, we give an example of a $\Jt$-tangent affine hypersphere with a non-involutive distribution $\DD$.
\begin{ex}
Let $f$ be defined as follows:
$$
f\colon\R^3\ni(x,y,z)\mapsto\left(
\begin{matrix}
xy+1\\
x+\frac{1}{2}y\\
xy\\
x-\frac{1}{2}y
\end{matrix}
\right)\cosh z-
\left(
\begin{matrix}
xy\\
x-\frac{1}{2}y\\
xy+1\\
x+\frac{1}{2}y
\end{matrix}
\right)\sinh z
\in\R^4.
$$
It is not difficult to check that $f$ is an immersion and the vector field
$
C\colon\R^3\ni(x,y,z)\mapsto-f(x,y,z)\in\R^4
$ is transversal to $f_{\ast }(\R^3)$.
 \par In the canonical basis $\{\frac{\partial}{\partial x},\frac{\partial}{\partial y},\frac{\partial}{\partial z}\}$ the second fundamental form $h$ is expressed as follows
$$
h=\left[
\begin{matrix}
0 & -1 & 0\\
-1 & 0 & 2x\\
0 & 2x & -1
\end{matrix}
\right].
$$
The above implies that $f$ is nondegenerate.
By straightforward computations we obtain that $C$ is the affine normal field. Since $\Jt C=-f_z\in f_{\ast}(TM)$ it follows that $f$ is a $\Jt$-tangent affine hypersphere.
Moreover, we have that $\Jt f_x=f_x$, so $\frac{\partial}{\partial x}\in \DD ^{+}$. We also have $$\Jt (2x^2f_x+f_y+2xf_z)=-(2x^2f_x+f_y+2xf_z),$$ so the vector field $W:=2x^2\frac{\partial}{\partial x}+\frac{\partial}{\partial y}+2x\frac{\partial}{\partial z}$ belongs to $\DD^{-}$. Now, we compute that
$$h\Big(\frac{\partial}{\partial x},W\Big)=2x^2h\Big(\frac{\partial}{\partial x},\frac{\partial}{\partial x}\Big)+h\Big(\frac{\partial}{\partial x},\frac{\partial}{\partial y}\Big)+2xh\Big(\frac{\partial}{\partial x},\frac{\partial}{\partial z}\Big)=-1.$$
Using the formula (\ref{Wzory::eq::3}) and the above we get
\begin{align*}
\eta \Big(\Big[\frac{\partial}{\partial x},W\Big]\Big)=h\Big(\frac{\partial}{\partial x},\varphi W\Big)-h\Big(W,\varphi \frac{\partial}{\partial x}\Big)=-2h\Big(\frac{\partial}{\partial x},W\Big)=2.
\end{align*}
Since $\ker \eta=\DD$, the above implies that $[\frac{\partial}{\partial x}, W]\notin \DD$ and in consequence the distribution $\DD$ is not involutive.
\end{ex}

\section{Some applications}
In this section we show some applications of results obtained in the previous section. In particular, we show that $\widetilde{J}$-tangent affine hyperspheres
can be classified in terms, of so called, Calabi products.
\par Recall that (\cite{HLLV}) the Calabi product of two proper affine hyperspheres
$$
\psi_1\colon M_1\rightarrow\R^{n_1+1}\quad\text{and}\quad \psi_2\colon M_1\rightarrow\R^{n_2+1}
$$
is an affine immersion
$$
\psi\colon M_1\times M_2\times\R\rightarrow\R^{n_1+n_2+2}
$$
defined by the formula
$$
\psi(x,y,z):=(c_1e^{\sqrt{\frac{n_2+1}{n_1+1}}az}\psi_1(x),c_2e^{-\sqrt{\frac{n_2+1}{n_1+1}}az}\psi_2(y))
$$
where $c_1,c_2$ and $a$ are nonzero constants.
\par Let $f_1$ and $f_2$ and $f$ be the affine hyperspheres from Theorem \ref{tw::ParaComplexSpheresDecomposition}
with the affine normal fields $C_1=-\alpha f_1$, $C_2=-\beta f_2$ and $C=-\lambda f$, respectively ($\alpha, \beta, \lambda>0$).
Let us denote by $CP(f_1,f_2)$ the Calabi product of $f_1$ and $f_2$ with $c_1=2$, $c_2=1$ and $a=-1$. That is we have
$$
CP(f_1,f_2)(x,y,z):=(2e^{-z}f_1(x),e^{z}f_2(y))
$$
where $x=(x_1,\ldots,x_n)$ and $y=(y_1,\ldots,y_n)$. We shall always consider $CP(f_1,f_2)$ with a transversal vector field $C_{CP}:=-\lambda\cdot CP(f_1,f_2)$.
\par For the affine hypersphere $f_i$ ($i=1,2$) we shall denote by $\nabla_i, h_i, S_i, \Theta_i$ the Blaschke connection, the Blaschke metric, the shape operator
and the induced volume form, respectively. Similarly for the affine hypersurface $CP(f_1,f_2)$ we denote the induced affine objects by $\nabla^{CP}, h_{CP}, S_{CP}$
and $\Theta_{CP}$.
\par Using similar methods like in \cite{DV} one may obtain that the affine metric $h_{CP}$ of $CP(f_1,f_2)$ is given by the product metric
\begin{align}\label{eq::CalabiProductMetric}
h_{CP}=\frac{1}{2}\frac{\alpha}{\lambda}h_1\otimes\frac{1}{2}\frac{\beta}{\lambda}h_2\otimes(-\frac{1}{\lambda})dz^2
\end{align}
and the connection $\nabla^{CP}$ can be expressed in terms of $\nabla_1$, $\nabla_2$ and $h_1$, $h_2$ as follows:
\begin{align}\label{eq::CalabiConnecion}
{\nabla^{CP}}_{\partial _{x_i}}{\partial _{x_j}}&={\nabla_1}_{\partial _{x_i}}{\partial _{x_j}}+\frac{1}{2}\alpha h_1(\partial _{x_i},\partial _{x_j})\partial _{z}\\
{\nabla^{CP}}_{\partial _{y_i}}{\partial _{y_j}}&={\nabla_2}_{\partial _{y_i}}{\partial _{y_j}}-\frac{1}{2}\beta h_2(\partial _{y_i},\partial _{y_j})\partial _{z}\\
{\nabla^{CP}}_{\partial _{x_i}}{\partial _{y_j}}&={\nabla^{CP}}_{\partial _{y_j}}{\partial _{x_i}}=0\\
{\nabla^{CP}}_{\partial _{x_i}}{\partial _{z}}&={\nabla^{CP}}_{\partial _{z}}{\partial _{x_i}}=-\partial _{x_i}\\
{\nabla^{CP}}_{\partial _{y_i}}{\partial _{z}}&={\nabla^{CP}}_{\partial _{z}}{\partial _{y_i}}=\partial _{y_i}\\
{\nabla^{CP}}_{\partial _{z}}{\partial _{z}}&=0.
\end{align}
By straightforward computations we obtain
\begin{align}\label{eq::CalabiProductOmegaH}
\omega_{h_{CP}}=\sqrt{\frac{\alpha^n\beta^n}{2^{2n}\lambda^{2n+1}}}\omega_{h_1}\otimes\omega_{h_2}
\end{align}
and
\begin{align}\label{eq::CalabiVolumeForms}
\Theta_{CP}=(-2)^{n+2}\cdot\frac{\lambda}{\alpha\beta}\Theta_1\otimes\Theta_2.
\end{align}
\par Let us define a $(2n+2)\times (2n+2)$ matrix $A$ by the formula
$$
A:=
\left[\begin{matrix}
\frac{1}{2}I_{n+1} & I_{n+1}\\
\frac{1}{2}I_{n+1} & -I_{n+1}
\end{matrix}\right]
$$
where by $I_n$ we denote an  identity matrix of dimension $n\times n$. It is easy to see that $\det A=(-1)^{n+1}$ that is $A$ is an equiaffine transforamtion of $\R^{2n+2}$. By straightforward calculation one may check that
\begin{equation}\label{eq::FasCalabiProduct}
f = A\circ CP(f_1,f_2)
\end{equation}
that is we have the following
\begin{wn}\label{wn::CalabiProduct}
Let $f_1$, $f_2$ and $f$ be like in Theorem \ref{tw::ParaComplexSpheresDecomposition} then
$f$ is up to equiaffine transformation the Calabi product of $f_1$ and $f_2$.
\end{wn}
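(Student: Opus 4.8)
The plan is to establish the explicit identity \eqref{eq::FasCalabiProduct}, namely $f=A\circ CP(f_1,f_2)$, by a direct computation; the Corollary then follows at once, since once \eqref{eq::FasCalabiProduct} is verified it only remains to record that $A$ is equiaffine. Throughout I would use the identification $\R^{2n+2}\cong\R^{n+1}\times\R^{n+1}$, under which $\Jt$ acts as the swap $(a,b)\mapsto(b,a)$ of the two factors and under which $f_1\times f_2=(f_1,f_2)$.

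First I would compute the right-hand side of \eqref{eq::FasCalabiProduct}. Writing $CP(f_1,f_2)(x,y,z)=(2e^{-z}f_1(x),e^{z}f_2(y))$ as a column vector in $\R^{n+1}\times\R^{n+1}$ and applying the block matrix $A$, whose blocks are scalar multiples of $I_{n+1}$ and hence commute, one reads off $A\circ CP(f_1,f_2)=(e^{-z}f_1(x)+e^{z}f_2(y),\,e^{-z}f_1(x)-e^{z}f_2(y))$.

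Next I would expand the left-hand side, i.e. the formula for $f$ from Theorem \ref{tw::ParaComplexSpheresDecomposition}. Using the swap description of $\Jt$ gives $\Jt\circ(f_1\times f_2)+f_1\times(-f_2)=(f_1+f_2,\,f_1-f_2)$ and $(f_1\times f_2)+\Jt\circ(f_1\times(-f_2))=(f_1-f_2,\,f_1+f_2)$; substituting these, collecting the coefficients of $\cosh z$ and $\sinh z$, and using $\cosh z\mp\sinh z=e^{\mp z}$ reduces $f$ to exactly the same pair $(e^{-z}f_1+e^{z}f_2,\,e^{-z}f_1-e^{z}f_2)$. This proves \eqref{eq::FasCalabiProduct}.

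Finally, since the blocks of $A$ commute I would compute the determinant by the scalar analogue $\det A=\det\bigl(\tfrac12 I_{n+1}\cdot(-I_{n+1})-I_{n+1}\cdot\tfrac12 I_{n+1}\bigr)=\det(-I_{n+1})=(-1)^{n+1}$, so $|\det A|=1$ and $A$ is an equiaffine transformation of $\R^{2n+2}$; together with \eqref{eq::FasCalabiProduct} this shows $f$ is, up to an equiaffine transformation, the Calabi product $CP(f_1,f_2)$. There is no genuine obstacle here — the argument is pure bookkeeping — and the only point demanding attention is keeping the swap action of $\Jt$ and the block structure of $A$ consistent, so that the hyperbolic-to-exponential conversion lines the two expressions up componentwise.
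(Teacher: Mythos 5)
Your proposal is correct and follows exactly the paper's route: the paper likewise defines the block matrix $A$, notes $\det A=(-1)^{n+1}$ so that $A$ is equiaffine, and asserts the identity $f=A\circ CP(f_1,f_2)$ by ``straightforward calculation,'' which is precisely the computation you carry out (and your verification of both sides reducing to $(e^{-z}f_1+e^{z}f_2,\,e^{-z}f_1-e^{z}f_2)$ is accurate). No differences in substance; you merely make explicit the bookkeeping the paper leaves to the reader.
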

Since $A$ is an equiaffine transforamtion then both $f$ and $CP(f_1,f_2)$ are affine hyperspheres. In particular, using (\ref{eq::CalabiProductOmegaH}) and
(\ref{eq::CalabiVolumeForms}) we obtain
\begin{wn}\label{wn::RelationBetweenAlphaBetaLambda}
Let $f_1$, $f_2$ and $f$ be like in Theorem \ref{tw::ParaComplexSpheresDecomposition} and let $C_1=-\alpha f_1$, $C_2=-\beta f_2$ and $C=-\lambda f$
be their affine normals. Then $\alpha, \beta$ and $\lambda$ are related by the formula:
$$
\lambda=\Bigg[\frac{(\alpha\beta)^{n+2}}{2^{4n+4}}\Bigg]^\frac{1}{2n+3}.
$$
\end{wn}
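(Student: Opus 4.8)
The plan is to exploit the identity $f=A\circ CP(f_1,f_2)$ recorded in \eqref{eq::FasCalabiProduct}, together with the fact that $A$ is equiaffine ($\det A=(-1)^{n+1}$). Since equiaffine transformations preserve both the induced volume element and the metric volume form, and hence the whole Blaschke structure, Corollary \ref{wn::CalabiProduct} tells us that the centro-affine field $C_{CP}:=-\lambda\cdot CP(f_1,f_2)$ is precisely the affine normal (Blaschke) field of $CP(f_1,f_2)$. The defining normalization of a Blaschke field is that its metric volume form agrees with the induced volume form, that is $\omega_{h_{CP}}=|\Theta_{CP}|$; this is the single relation from which the formula for $\lambda$ will be extracted.

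First I would substitute the two volume-form computations \eqref{eq::CalabiProductOmegaH} and \eqref{eq::CalabiVolumeForms} into $\omega_{h_{CP}}=|\Theta_{CP}|$. Both sides are expressed as products of the corresponding objects attached to $f_1$ and $f_2$, with scalar prefactors depending on $\alpha,\beta,\lambda$. Now, by hypothesis $C_1=-\alpha f_1$ and $C_2=-\beta f_2$ are themselves the affine normals of $f_1$ and $f_2$, so for each $i=1,2$ the Blaschke normalization gives $\omega_{h_i}=\Theta_i$. Consequently the product forms $\omega_{h_1}\otimes\omega_{h_2}$ and $\Theta_1\otimes\Theta_2$ coincide and cancel from the equation, leaving only a scalar identity among the positive constants $\alpha,\beta,\lambda$.

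Concretely, since $\alpha,\beta,\lambda>0$ and $|(-2)^{n+2}|=2^{n+2}$, the cancellation reduces $\omega_{h_{CP}}=|\Theta_{CP}|$ to
$$
\sqrt{\frac{\alpha^n\beta^n}{2^{2n}\lambda^{2n+1}}}=2^{n+2}\,\frac{\lambda}{\alpha\beta}.
$$
Squaring and clearing denominators yields $(\alpha\beta)^{n+2}=2^{4n+4}\lambda^{2n+3}$, whence $\lambda=\big[(\alpha\beta)^{n+2}/2^{4n+4}\big]^{1/(2n+3)}$, which is the assertion. Because this is a direct consequence of the two volume formulas already established, I do not expect any genuine obstacle; the only points needing care are bookkeeping of absolute values and orientations (so that the positive roots enter correctly) and the preliminary verification that the equiaffine factor $A$ leaves $\omega_h$ and $\Theta$ unchanged up to sign, which is what licenses reading the Blaschke normalization off $CP(f_1,f_2)$ rather than off $f$ itself.
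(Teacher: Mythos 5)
Your proposal is correct and follows exactly the route the paper intends: the paper's justification of this corollary is precisely the one-line remark that, $A$ being equiaffine, $CP(f_1,f_2)$ inherits the Blaschke structure of $f$, so the normalization $\omega_{h_{CP}}=|\Theta_{CP}|$ combined with \eqref{eq::CalabiProductOmegaH}, \eqref{eq::CalabiVolumeForms} and $\omega_{h_i}=\Theta_i$ yields the stated relation. Your algebra ($(\alpha\beta)^{n+2}=2^{4n+4}\lambda^{2n+3}$) checks out, so there is nothing to add.
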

\par Calabi products have many interesting properties. In particular, they preserve parallel cubic form (see \cite{DV}).
Moreover the affine metric of Calabi product is flat if and only if both components have a flat affine metric.
\par Now using (\ref{eq::FasCalabiProduct}) and results from \cite{MN,HL,HLV} one can obtain some classification results
for $\Jt$-tangent affine hyperspheres with the parallel cubic form. For example, when $\dim M = 5$, we have the following
\begin{wn}
Let $f\colon M\rightarrow \R^6$ be a $\Jt$-tangent affine hypersphere with an involutive distribution $\DD$ and a parallel cubic form. Then $f$ is locally affine equivalent to Calabi product
$CP(f_1,f_2)$ where $f_i$ ($i=1,2$) is one of the following surfaces:
\begin{align}
\label{eq::eli}x^2+y^2+z^2=1 \\
\label{eq::hip1}x^2+y^2-z^2=1 \\
\label{eq::hip2}x^2+y^2-z^2=-1 \\
\label{eq::4th}xyz=1 \\
\label{eq::5th}(x^2+y^2)z=1
\end{align}
\end{wn}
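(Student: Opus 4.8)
The plan is to realize $f$ as a Calabi product of two $2$-dimensional affine hyperspheres, push the parallel-cubic-form hypothesis down to the two factors, and then quote the classification of $2$-dimensional proper affine hyperspheres with parallel cubic form. First I would record the dimension count: since $\dim M=5$ we have $2n+1=5$, hence $n=2$ and $\R^{2n+2}=\R^{6}$. As $f$ is a $\Jt$-tangent affine hypersphere with involutive $\DD$, Theorem \ref{tw::ParaComplexSpheresDecomposition} produces two proper $2$-dimensional affine hyperspheres $f_1,f_2$ (mapping open subsets of $\R^{2}$ into $\R^{3}$) from which $f$ is locally built, and Corollary \ref{wn::CalabiProduct} gives $f=A\circ CP(f_1,f_2)$ with $A$ the equiaffine transformation of \eqref{eq::FasCalabiProduct}.

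Next I would transfer the hypothesis to the factors. Because $A$ is equiaffine, $f$ and $CP(f_1,f_2)$ share the same Blaschke connection and affine metric under the identification \eqref{eq::FasCalabiProduct}, so $f$ has parallel cubic form if and only if $CP(f_1,f_2)$ does. I then use the Calabi-product parallelism criterion (see \cite{DV}): writing $K^{CP}:=\nabla^{CP}-\nabLC^{CP}$ for the difference tensor of $CP(f_1,f_2)$ and $K_i:=\nabla_i-\nabLC_i$ for those of the factors, the explicit formulas \eqref{eq::CalabiProductMetric}--\eqref{eq::CalabiConnecion} show that, relative to the splitting $TM_1\oplus TM_2\oplus\R\partial_z$, the tensor $K^{CP}$ consists of $K_1$, $K_2$ together with mixed components built only from $h_1$, $h_2$ and $\partial_z$. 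Since the latter pieces are parallel for the product metric \eqref{eq::CalabiProductMetric}, one gets $\nabLC^{CP}K^{CP}=0$ if and only if $\nabLC_1K_1=0$ and $\nabLC_2K_2=0$. Hence the standing assumption that $f$ has parallel cubic form forces both $f_1$ and $f_2$ to be proper $2$-dimensional affine hyperspheres with parallel cubic form.

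Finally I would invoke the classification of such surfaces from \cite{MN,HL,HLV}: up to affine equivalence, the proper $2$-dimensional affine hyperspheres with parallel cubic form are exactly the five surfaces \eqref{eq::eli}--\eqref{eq::5th}, namely the three quadrics (on which the cubic form vanishes) and the two essentially cubic examples $xyz=1$ and $(x^{2}+y^{2})z=1$. Since each $f_i$ is affinely equivalent to one of these and $f$ is affinely equivalent to $CP(f_1,f_2)$, the claim follows. The main obstacle is the reverse implication in the middle step, that parallelism of the cubic form of $CP(f_1,f_2)$ forces parallelism for each factor; this is the nontrivial direction of the Calabi-product criterion, which I would obtain either by direct inspection of the decomposition of $K^{CP}$ afforded by \eqref{eq::CalabiProductMetric}--\eqref{eq::CalabiConnecion} or by appealing to \cite{DV}.
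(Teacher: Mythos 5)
Your proposal is correct and follows essentially the same route the paper intends: decompose $f$ as $A\circ CP(f_1,f_2)$ via Theorem \ref{tw::ParaComplexSpheresDecomposition} and \eqref{eq::FasCalabiProduct}, use the Calabi-product criterion from \cite{DV} to pass the parallel-cubic-form condition to the factors, and then quote the two-dimensional classification from \cite{MN,HL,HLV}. You actually supply more detail than the paper does on the nontrivial direction (that parallelism of the product forces parallelism of each factor), and that detail is sound.
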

As it was already mentioned in previous section (see Remark \ref{uw::Flat}) all 3-dimensional $\Jt$-tangent affine hyperspheres with involutive distribution $\DD$
are flat. Of course this is not the case in higher dimensions. In particular, since the only 2-dimensional proper flat affine spheres are (\ref{eq::4th}) and (\ref{eq::5th}) (see \cite{MR}), taking Calabi products, we get the following classification result in 5-dimensional case:
\begin{wn}
If $f\colon M\rightarrow \R^6$ is a flat $\Jt$-tangent affine hypersphere with involutive distribution $\DD$ then $f$
is locally affine equivalent to one of the following hypersufaces:
\begin{align}
\label{eq::flat1}x_1x_2x_3x_4x_5x_6=1 \\
\label{eq::flat2}(x_1^2+x_2^2)(x_3^2+x_4^2)x_5x_6=1 \\
\label{eq::flat3}(x_1^2+x_2^2)x_3x_4x_5x_6=1
\end{align}
\end{wn}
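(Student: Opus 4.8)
The plan is to reduce to the Calabi-product description and then use the known list of flat $2$-dimensional proper affine spheres; note that here $2n+2=6$, so $n=2$ and the building blocks are $2$-dimensional. First I would apply Theorem~\ref{tw::ParaComplexSpheresDecomposition} (equivalently Corollary~\ref{wn::CalabiProduct}): since $f$ is a $\Jt$-tangent affine hypersphere with involutive $\DD$, locally and up to an equiaffine transformation $A$ one has $f=A\circ CP(f_1,f_2)$, where $f_1,f_2$ are proper $2$-dimensional affine hyperspheres. Because $A$ is affine, the affine metric of $f$ is flat if and only if the affine metric $h_{CP}$ of $CP(f_1,f_2)$ is flat; by the product-metric formula~\eqref{eq::CalabiProductMetric} together with the cited fact that the affine metric of a Calabi product is flat exactly when both factors are, this holds if and only if both $f_1$ and $f_2$ are flat.

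Next I would invoke \cite{MR}, according to which the only $2$-dimensional proper flat affine spheres are, up to affine equivalence, \eqref{eq::4th} and \eqref{eq::5th}. Hence each $f_i$ is affinely equivalent to one of these two surfaces, and the unordered pair $\{f_1,f_2\}$ falls into exactly three cases: both of type~\eqref{eq::4th}, both of type~\eqref{eq::5th}, or one of each type.

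Finally I would compute the image of the Calabi product in each case directly from $CP(f_1,f_2)(x,y,z)=(2e^{-z}f_1(x),e^z f_2(y))$. Writing the output coordinates as $X_1,\dots,X_6$, each multiplicative defining relation is reproduced up to a scalar $e^{\pm 3z}$, and these scalars cancel when the two relations are multiplied: two factors of type~\eqref{eq::4th} give $X_1X_2X_3\cdot X_4X_5X_6=\const$, i.e.\ \eqref{eq::flat1}; two factors of type~\eqref{eq::5th} give $(X_1^2+X_2^2)X_3\cdot(X_4^2+X_5^2)X_6=\const$, which after relabeling coordinates is \eqref{eq::flat2}; and one factor of each type gives $X_1X_2X_3\cdot(X_4^2+X_5^2)X_6=\const$, i.e.\ \eqref{eq::flat3}. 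Absorbing the constant by a dilation and permuting coordinates yields exactly the three normal forms; interchanging $f_1$ and $f_2$ in the mixed case merely permutes coordinates and produces nothing new. The step needing the most care is this explicit computation: verifying that the exponential factors cancel and that the products of the defining equations reduce, up to an affine change of coordinates and rescaling, to the three stated hypersurfaces.
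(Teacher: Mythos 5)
Your proposal is correct and follows essentially the same route as the paper: reduce to the Calabi product via Theorem~\ref{tw::ParaComplexSpheresDecomposition} and Corollary~\ref{wn::CalabiProduct}, use that flatness of the Calabi product forces both factors to be flat, invoke the Magid--Ryan classification of $2$-dimensional proper flat affine spheres as \eqref{eq::4th} and \eqref{eq::5th}, and form the three possible products. The paper states this argument only in passing, so your explicit verification that the $e^{\pm 3z}$ factors cancel in each of the three cases is a welcome elaboration rather than a deviation.
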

Note, that contrary to $3$-dimensional case, not all $5$-dimensional flat proper affine hyperspheres are (after suitable affine transformation)
$\Jt$-tangent affine hyperspheres with the involutive distribution $\DD$. Indeed, it is well known that $(x_1^2+x_2^2)(x_3^2+x_4^2)(x_5^2+x_6^2)=1$
is a proper flat affine hypersphere but it is not affinely equivalent to any of (\ref{eq::flat1})--(\ref{eq::flat3}). Actually we will show
(see proof of Prop. \ref{prop::FlatJttangent}) that this hypersphere cannot be transformed into $\Jt$-tangent affine hypersphere.
\par Recall that we have the following general classification result (\cite{V})
\begin{tw}[\cite{V}]\label{tw::FlatHyperspheres}
Let $M$ be an affine hypersphere in $\R^{n+1}$ with constant sectional curvature $c$ and with nonzero Pick invariant $J$. Then $c=0$ and $M$
is equivalent to
\begin{equation}\label{eq::flatOdd}
(x_1^2\pm x_2^2)(x_3^2\pm x_4^2)\cdots(x_{2m-1}^2\pm x_{2m}^2)=1,
\end{equation}
if $n=2m-1$ or with
\begin{equation}\label{eq::flatEven}
(x_1^2\pm x_2^2)(x_3^2\pm x_4^2)\cdots(x_{2m-1}^2\pm x_{2m}^2)x_{2m+1}=1,
\end{equation}
if $n=2m$.
\end{tw}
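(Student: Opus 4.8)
The plan is to pass from the equiaffine data to the Blaschke metric $h$ and to encode everything in the \emph{difference tensor} (Fubini--Pick form) $K(X,Y):=\nabla_XY-\nabLC_XY$, where $\nabLC$ is the Levi--Civita connection of $h$. Writing $K_X$ for the $h$-symmetric endomorphism $Y\mapsto K(X,Y)$, apolarity gives $\tr K_X=0$, and the Pick invariant is, up to a universal constant, $\|K\|^2$; in particular $J\neq 0$ is equivalent to $K\not\equiv 0$. Since $M$ is an affine hypersphere, $S=\lambda\,\id$ and the Gauss equation \eqref{eq::Gauss} reads $R(X,Y)Z=\lambda(h(Y,Z)X-h(X,Z)Y)$. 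Because $f$ is Blaschke, the Codazzi equation \eqref{eq::Codazzih} forces $\nabLC K$ to be totally symmetric, so the first-order terms in the standard expansion of $R$ in terms of $\nabLC$ and $K$ cancel, leaving $R(X,Y)=\widehat{R}(X,Y)+[K_X,K_Y]$, where $\widehat{R}$ is the curvature of $\nabLC$. Imposing constant sectional curvature $\widehat{R}(X,Y)Z=c(h(Y,Z)X-h(X,Z)Y)$ yields the \textbf{algebraic Gauss equation}
\begin{equation*}
[K_X,K_Y]Z=(\lambda-c)\bigl(h(Y,Z)X-h(X,Z)Y\bigr),
\end{equation*}
which is the single identity driving the whole classification.

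First I would extract its trace consequences. Contracting once and using $\tr K_X=0$ gives $\tr(K_YK_Z)=(n-1)(c-\lambda)h(Y,Z)$, and a second contraction expresses $J$ as a universal multiple of $c-\lambda$; this already shows that $J\neq0$ forces $c\neq\lambda$. The heart of the matter is then a pointwise algebraic study of the symmetric, totally traceless family $\{K_X\}$ subject to the commutator identity above.

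The key structural step — and the main obstacle — is to show that the existence of a nonzero $K$ obeying the algebraic Gauss equation forces a local decomposition of $f$ as an (iterated) affine Calabi product of lower-dimensional affine hyperspheres. Concretely, I would fix a point, choose a distinguished $h$-unit vector $e$ and study the eigenspace decomposition of the symmetric operator $K_e$; apolarity together with $[K_X,K_Y]=(\lambda-c)\,X\wedge_h Y$ constrains how $K$ couples these eigenspaces and produces a splitting $TM=\DD_0\oplus\DD_1\oplus\cdots$ on which $K$ acts block-diagonally. Feeding this back into the symmetry of $\nabLC K$ (Codazzi) shows the summands are $\nabLC$-parallel and $\nabla$-compatible, so that a Moore/de Rham-type lemma for affine immersions realizes the abstract splitting as a genuine affine product, i.e. a Calabi product. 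This is exactly the delicate part: turning the pointwise linear algebra of the cubic form into an integrable, globally consistent distribution and then into an explicit affine factorization.

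Finally I would run the reduction by induction on $\dim M$. Each factorization lowers the dimension, and the process terminates in $1$-dimensional proper affine hyperspheres, which are conics $x^2\pm y^2=1$ (in even total dimension a single linear factor $x_{2m+1}$ survives as the central $\R$-direction of the last Calabi product). Reassembling the iterated Calabi product of $m$ conics produces precisely the normal forms \eqref{eq::flatOdd} and \eqref{eq::flatEven}. A direct computation — or the general fact that the Calabi-product metric is flat as soon as its factors are — shows the resulting affine metric is flat, giving $c=0$ and completing the classification; the quadrics, which are exactly the excluded $J=0$ case, account for the alternative of constant but possibly nonzero curvature.
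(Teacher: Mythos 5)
First, a point of comparison: the paper does not prove this statement at all. Theorem \ref{tw::FlatHyperspheres} is the Magid--Ryan conjecture for equiaffine hyperspheres of constant sectional curvature, quoted from \cite{V} and used as a black box, so there is no in-paper proof to measure your argument against. Judged on its own terms, your outline does set up the correct framework and the correct first reductions: the relation $R(X,Y)Z=\widehat{R}(X,Y)Z+[K_X,K_Y]Z$ for Blaschke hyperspheres, the resulting algebraic Gauss equation $[K_X,K_Y]Z=(\lambda-c)\bigl(h(Y,Z)X-h(X,Z)Y\bigr)$, and the \emph{theorema egregium} $J=c-\lambda$ (so $J\neq0$ iff $c\neq\lambda$) are all right, and they are indeed the starting point of the actual proof.

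The genuine gap is the step you yourself flag as ``the delicate part''. All of the substance of the theorem lies in passing from the pointwise identity $[K_X,K_Y]=(\lambda-c)\,X\wedge_h Y$ together with apolarity to the iterated Calabi-product decomposition, and you give no argument for it. Moreover, the one concrete device you propose --- diagonalizing the $h$-symmetric operator $K_e$ --- is not available in general: the Blaschke metric in this theorem may be indefinite (that is exactly what the $\pm$ signs in \eqref{eq::flatOdd} and \eqref{eq::flatEven} record), and an operator that is self-adjoint with respect to an indefinite metric need not be diagonalizable, so one must handle null eigenvectors and nontrivial Jordan blocks of $K_e$. This is precisely why the conjecture was settled in stages (definite surfaces by Magid--Ryan, the Lorentzian case later, general signature only in \cite{V}) and why Vrancken's argument is a long case analysis built around carefully chosen null frames rather than a short eigenspace computation. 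Your plan also leaves unaddressed how the pointwise algebraic splitting is made smooth and $\widehat{\nabla}$-parallel, and how a de Rham-type splitting of the metric is upgraded to an affine factorization of the immersion. The outline points in the right direction, but the proof is missing exactly where the difficulty is concentrated, so it cannot be accepted as a proof of the statement.
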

Before we proceed with classification result for flat $\Jt$-tangent affine hyperspheres we shall show the following lemma
\begin{lm}\label{lm::Tranformation}
Let $f\colon M\rightarrow \R^{2n+2}$ be an affine hypersphere with the Blaschke field $C\colon M\rightarrow \R^{2n+2}$.
$f$ is affine equivalent to $\Jt$-tangent affine hypersphere if and only if there exists an affine transformation
$B\colon \R^{2n+2}\rightarrow \R^{2n+2}$ such that $B\circ C$ is tangent to $f$ and $B\sim\Jt$ (i.e. matrices for $B$ and $\Jt$ are similar)
\end{lm}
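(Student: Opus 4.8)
The statement to prove is Lemma \ref{lm::Tranformation}, which characterizes when an affine hypersphere is affine equivalent to a $\Jt$-tangent one. The plan is to prove the two implications of the biconditional separately, and the main work will be in carefully tracking how the para-complex structure $\Jt$ transforms under an ambient affine map. I would begin by recalling that $\Jt$ is a para-complex structure, so $\Jt^2=\id$ and $\Jt$ has eigenvalues $\pm 1$ each with multiplicity $n+1$; in particular its conjugacy class is completely determined by these eigenvalue multiplicities. Throughout I would write an affine transformation $B$ of $\R^{2n+2}$ as $B(v)=B_0 v + b$ with $B_0\in GL(2n+2,\R)$ the linear part and $b$ a translation, and note that tangency conditions and the induced affine objects are governed by the linear part $B_0$.

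**The forward implication.**
Suppose $f$ is affine equivalent to a $\Jt$-tangent affine hypersphere $\bar f$; that is, there is an affine transformation $T$ with $\bar f = T\circ f$ (up to reparametrization), and $\bar f$ carries a $\Jt$-tangent Blaschke field $\bar C$. Since the Blaschke field is an affine invariant, $\bar C = T_0 \circ C$ (the linear part acts on the transversal field), where $C$ is the Blaschke field of $f$. The $\Jt$-tangency of $\bar C$ means $\Jt \bar C \in \bar f_\ast(TM)$, i.e. $\Jt T_0 C \in T_0 f_\ast(TM)$. Applying $T_0^{-1}$ gives $T_0^{-1}\Jt T_0\, C \in f_\ast(TM)$. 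Thus I would \emph{set} $B_0 := T_0^{-1}\Jt T_0$ and let $B$ be the corresponding linear map (no translation needed); then $B\circ C$ is tangent to $f$ by construction, and $B = T_0^{-1}\Jt T_0 \sim \Jt$ since conjugate matrices are similar. This gives the desired $B$.

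**The reverse implication.**
Conversely, suppose there is an affine $B$ with $B\circ C$ tangent to $f$ and $B \sim \Jt$. Since $B$ and $\Jt$ are similar linear maps (the translation part is irrelevant for similarity, and tangency to the hypersurface only sees the linear part acting on $C$, so I would first reduce to $B$ linear), there exists $P\in GL(2n+2,\R)$ with $B = P^{-1}\Jt P$. The idea is to use $P$ as the affine transformation realizing the equivalence: define $\bar f := P\circ f$ with Blaschke field $\bar C := P_0\circ C$. I then compute $\Jt \bar C = \Jt P C = P (P^{-1}\Jt P) C = P (B\,C)$, and since $B\circ C$ is tangent to $f$, i.e. $B_0 C \in f_\ast(TM)$, we get $\Jt \bar C = P(B_0 C) \in P f_\ast(TM) = \bar f_\ast(TM)$. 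Hence $\bar C$ is $\Jt$-tangent, and because affine transformations preserve the Blaschke structure (the affine normal is equiaffinely invariant), $\bar f$ is an affine hypersphere with a $\Jt$-tangent Blaschke field, as required.

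**The main obstacle.**
The delicate point is the bookkeeping between the linear and affine parts, and the fact that the Blaschke field transforms only under the linear part while tangency must be read in the correct tangent space. I would be careful to verify that $P\circ f$ is again a nondegenerate Blaschke hypersurface whose Blaschke field is exactly $P_0\circ C$ — this uses that both nondegeneracy and the two normalizing conditions defining the affine normal (the parallelism of $\omega_h$ and $\omega_h=\Theta$) are invariant under equiaffine, hence affine, transformations up to a constant factor that can be absorbed. Once the affine invariance of the Blaschke apparatus is in hand, both implications reduce to the simple algebraic identity $\Jt = P\, B\, P^{-1}$ applied to the transversal field, so I expect the similarity-class argument to be the conceptual crux while the verification of invariance is the technical one.
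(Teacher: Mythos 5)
Your proposal is correct and follows essentially the same route as the paper: both directions reduce to the conjugation identity $B=A^{-1}\circ\Jt\circ A$ applied to the transversal field, with the forward implication obtained by transporting the $\Jt$-tangency of $A\circ C$ back through $A^{-1}$ and the converse by taking the conjugating matrix $P$ as the affine equivalence. Your additional remarks on the affine invariance of the Blaschke apparatus (up to a scalar that does not affect tangency) are a harmless elaboration of a point the paper leaves implicit.
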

\begin{proof}
If $f$ is affine equivalent to $\Jt$-tangent affine hypersphere then there exists an affine transformation $A\colon \R^{2n+2}\rightarrow \R^{2n+2}$
such that $A\circ f$ considered with the transversal vector field $A\circ C$ is $\Jt$-tangent. That is if $X_1,\ldots,X_{2n+1}$ is a basis of vector fields on $M$
then
\begin{align*}
0&=\det [A\circ f_\ast X_1,\ldots,A\circ f_\ast X_{2n+1},\Jt\circ A\circ C]\\
 &=\det A\det [f_\ast X_1,\ldots,f_\ast X_{2n+1},A^{-1}\circ \Jt\circ A\circ C].
\end{align*}
Now it is enough to take $B:=A^{-1}\circ \Jt\circ A$. \par
On the other hand, when $B\sim\Jt$, there exists an invertible matrix $P$ such that
$B:=P^{-1}\circ \Jt\circ P$. Now taking $A:=P$ we prove the converse.
\end{proof}
Now we obtain
\begin{stw}\label{prop::FlatJttangent}
Let $f\colon M\rightarrow \R^{2n+2}$ be a flat $\Jt$-tangent affine hypersphere then distribution $\DD$ is involutive and
$f$ is affine equivalent to either
\begin{equation}\label{eq::flat2n+1::1}
(x_1^2\pm x_2^2)(x_3^2\pm x_4^2)\cdots(x_{2n+1}^2\pm x_{2n+2}^2)=1,
\end{equation}
if $n$ is odd or
\begin{equation}\label{eq::flat2n+1::3}
(x_1^2\pm x_2^2)(x_3^2\pm x_4^2)\cdots(x_{2n-1}^2\pm x_{2n}^2)x_{2n+1}x_{2n+2}=1,
\end{equation}
if $n$ is even.
\end{stw}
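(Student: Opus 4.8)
The plan is to treat the involutivity of $\DD$ first (this is the only genuinely new ingredient), and then deduce the normal forms by combining the Calabi-product decomposition already available (Theorem \ref{tw::ParaComplexSpheresDecomposition} and Corollary \ref{wn::CalabiProduct}) with the Vrancken classification (Theorem \ref{tw::FlatHyperspheres}) applied to the \emph{lower-dimensional} factors, followed by a parity count.

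For involutivity I would first record that, being $\Jt$-tangent, $f$ is proper by Theorem \ref{tw::NoImpropSph}, so $S=\lambda\id$ with $\lambda\neq0$ and $\tau=0$. Thus for $X,Y\in\DD$ equation \eqref{Wzory::eq::3} collapses to $\eta([X,Y])=h(X,\varphi Y)-h(Y,\varphi X)$, and $\DD$ is involutive exactly when this $2$-form vanishes on $\DD$. Splitting $\DD=\DD^{+}\oplus\DD^{-}$ into the $\pm1$-eigendistributions of $\varphi$ and using $\varphi X=X$ on $\DD^{+}$, $\varphi Y=-Y$ on $\DD^{-}$, the form vanishes identically on $\DD^{+}\times\DD^{+}$ and on $\DD^{-}\times\DD^{-}$, while on $\DD^{+}\times\DD^{-}$ it equals $-2h(X,Y)$. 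Hence \emph{$\DD$ is involutive if and only if $\DD^{+}$ and $\DD^{-}$ are $h$-orthogonal}. To force this orthogonality from flatness, let $\hat\nabla$ be the Levi-Civita connection of $h$ and $K=\nabla-\hat\nabla$ the totally symmetric, trace-free difference tensor. For a Blaschke affine sphere the Codazzi equations make $\hat\nabla K$ totally symmetric, so (with the standard conventions) the Gauss equation for $\hat\nabla$ reduces to $\hat R(X,Y)Z=\lambda\bigl(h(Y,Z)X-h(X,Z)Y\bigr)-[K_X,K_Y]Z$, and flatness $\hat R=0$ yields $[K_X,K_Y]Z=\lambda\bigl(h(Y,Z)X-h(X,Z)Y\bigr)$. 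Writing $V^{+},V^{-}$ for the $\pm1$-eigenspaces of the constant ambient structure $\Jt$ on $\R^{2n+2}$, one has $f_\ast\DD^{\pm}\subseteq V^{\pm}$, while $C=-\lambda f$ and $f_\ast\xi=-\lambda\Jt f$ have $V^{+}\oplus V^{-}$ components tied to the position vector. Feeding this splitting into the relation for $[K_X,K_Y]$, together with the $h$-self-adjointness of each $K_X$ and apolarity, forces the mixed values $h(\DD^{+},\DD^{-})$ to vanish. I expect this to be the main obstacle: as the example closing Section 4 shows, $h(\DD^{+},\DD^{-})$ need not vanish without flatness, so the argument must genuinely use $\hat R=0$.

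Once $\DD$ is known to be involutive, I would invoke Theorem \ref{tw::ParaComplexSpheresDecomposition} and Corollary \ref{wn::CalabiProduct}: up to an equiaffine $A$, $f$ is the Calabi product $CP(f_1,f_2)$ of two proper $n$-dimensional affine hyperspheres $f_1,f_2$. Since the affine metric $h_{CP}$ is the product metric \eqref{eq::CalabiProductMetric}, $f$ is flat if and only if both $f_1$ and $f_2$ are flat. Each flat $f_i\colon M_i\to\R^{n+1}$ is proper, hence has nonzero Pick invariant (a quadric would carry nonzero constant curvature), so Theorem \ref{tw::FlatHyperspheres} gives its normal form: for $n$ odd, $f_i$ is a product of $\tfrac{n+1}{2}$ quadratic factors $(x^2\pm y^2)$; for $n$ even, a product of $\tfrac{n}{2}$ quadratic factors and one linear factor. (For $n=1$ this is just the ellipse/hyperbola alternative, recovering Theorem \ref{tw::klasyfikacjaSfer3dim}.)

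Finally I would recombine. Writing $f_i=\{P_i=1\}$ with $P_i$ of degree $n+1$, the Calabi product $CP(f_1,f_2)(x,y,z)=(2e^{-z}f_1,e^{z}f_2)$ satisfies $P_1\cdot P_2=\const$, since the factors $2^{\,n+1}e^{-(n+1)z}$ and $e^{(n+1)z}$ cancel; so after the equiaffine $A$ of \eqref{eq::FasCalabiProduct}, $f$ is affinely equivalent to $\{P_1P_2=1\}$ in $\R^{2n+2}$. For $n$ odd all $n+1$ factors are quadratic, which is exactly \eqref{eq::flat2n+1::1}; for $n$ even the $n$ quadratic factors survive while the two linear factors of $f_1,f_2$ combine into the single hyperbolic block $x_{2n+1}x_{2n+2}$, which is exactly \eqref{eq::flat2n+1::3}. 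This recombination also makes transparent the parity phenomenon noted before the statement: for $n$ even at least one factor is forced to be of hyperbolic type, so the purely elliptic model cannot be $\Jt$-tangent.
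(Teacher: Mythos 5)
Your reduction of involutivity to the $h$-orthogonality of $\DD^{+}$ and $\DD^{-}$ is correct and useful, and your recombination of the two Vrancken normal forms of the factors into \eqref{eq::flat2n+1::1} and \eqref{eq::flat2n+1::3} is sound (the exponential factors in the Calabi product cancel because each defining polynomial is homogeneous of degree $n+1$). But the argument has a genuine gap exactly where you yourself flag ``the main obstacle'': the claim that flatness forces $h(\DD^{+},\DD^{-})=0$ is never proved. You record the intrinsic consequence $[K_X,K_Y]Z=\lambda\bigl(h(Y,Z)X-h(X,Z)Y\bigr)$ of $\hat R=0$ and then assert that ``feeding'' the ambient eigenspace splitting of $\Jt$ into it, together with self-adjointness of $K_X$ and apolarity, yields the orthogonality. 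No computation is given, and it is not clear how the extrinsic data ($f_\ast\DD^{\pm}$ lying in the $\pm1$-eigenspaces of $\Jt$, the position of $\xi$ and $C$) is supposed to interact with a purely intrinsic commutator identity; the relations \eqref{Wzory::eq::1}--\eqref{Wzory::eq::6} encoding the $\Jt$-tangency would have to enter somewhere, and they do not. Since Theorem \ref{tw::ParaComplexSpheresDecomposition} takes involutivity of $\DD$ as a hypothesis, every later step of your proof --- the Calabi decomposition, the application of Theorem \ref{tw::FlatHyperspheres} to the $n$-dimensional factors, and the parity argument excluding the all-elliptic model for $n$ even --- is conditional on this unproved step.

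The paper proceeds in the opposite order and thereby never has to prove involutivity intrinsically. It applies Theorem \ref{tw::FlatHyperspheres} directly in dimension $2n+1$ to put $f$ itself in the form $(x_1^2\pm x_2^2)\cdots(x_{2n+1}^2\pm x_{2n+2}^2)=1$, observes that each of these models except the all-``$+$'' one with $n$ even is a Calabi product of two $n$-dimensional flat proper spheres and hence, by the converse part of Theorem \ref{tw::ParaComplexSpheresDecomposition}, admits a $\Jt$-tangent realization with involutive $\DD$, and finally eliminates the exceptional model by an explicit computation: using Lemma \ref{lm::Tranformation} it shows that any matrix $B\sim\Jt$ with $B\circ f$ tangent to $f$ must be block-diagonal with $2\times 2$ blocks of positive determinant, so $\det B>0$, contradicting $\det\Jt=(-1)^{n+1}=-1$ for $n$ even. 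Your parity argument would replace that determinant computation elegantly, but only once involutivity is secured; as it stands you must either supply the missing derivation of $h(\DD^{+},\DD^{-})=0$ from flatness or revert to the paper's route through the $(2n+1)$-dimensional classification.
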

\begin{proof}
Since proper flat affine hyperspheres have nonzero Pick invariant, by Theorem \ref{tw::FlatHyperspheres}, they are affine
equivalent to (\ref{eq::flatOdd}) or (\ref{eq::flatEven}). In particular, $2n+1$ dimensional flat affine hyperspheres are equivalent to
\begin{equation}\label{eq::flat2n+1::2}
(x_1^2\pm x_2^2)(x_3^2\pm x_4^2)\cdots(x_{2n+1}^2\pm x_{2n+2}^2)=1.
\end{equation}
We shall show that all the above affine hyperspheres (with one exception) are, after suitable affine transformation, $\Jt$-tangent.
\par If $n$ is odd (\ref{eq::flat2n+1::2}) can be obtained as the Calabi product of two flat $n$-dimensional affine hyperspheres
$$
(x_1^2\pm x_2^2)\cdots(x_{n}^2\pm x_{n+1}^2)=1
$$
and
$$
(x_{n+2}^2\pm x_{n+3}^2)\cdots(x_{2n+1}^2\pm x_{2n+2}^2)=1
$$
and as such is affine equivalent to $\Jt$-tangent affine hypersphere with involutive distribution $\DD$.
\par If $n$ is even and at least one of "$\pm$" in (\ref{eq::flat2n+1::2}) is "$-$", without loss
of generality we may assume that (\ref{eq::flat2n+1::2}) contains  $(x_{2n+1}^2 - x_{2n+2}^2)$ term. Now applying affine transformation changing
$(x_{2n+1}^2 - x_{2n+2}^2)$ into $x_{2n+1}x_{2n+2}$ we can transform (\ref{eq::flat2n+1::2}) into (\ref{eq::flat2n+1::3}).
Since (\ref{eq::flat2n+1::3}) is the Calabi product of two flat $n$-dimensional affine hyperspheres of form (\ref{eq::flatEven})
it is affine equivalent to $\Jt$-tangent affine hypersphere with involutive distribution $\DD$.
\par Now it remained to show that for $n$ even
\begin{equation}\label{eq::NotJtTangent}
(x_1^2+ x_2^2)(x_3^2+ x_4^2)\cdots(x_{2n+1}^2+ x_{2n+2}^2)=1
\end{equation}
cannot be transformed by affine transformation into $\Jt$-tangent affine hypersphere. First note that (\ref{eq::NotJtTangent}) can be parameterized
as follows:
\begin{align*}
f(v_1,\ldots,v_{n+1},u_1,\ldots,u_n)=
\left(\begin{matrix}
e^{u_1}\cos v_1 \\
e^{u_1}\sin v_1 \\
\cdots \\
e^{u_n}\cos v_n \\
e^{u_n}\sin v_n \\
e^{-u_1-\cdots-u_n}\cos v_{n+1} \\
e^{-u_1-\cdots-u_n}\sin v_{n+1} \\
\end{matrix}\right)
\end{align*}
where $v_i, u_i\in\R$. Assume that $f$ is affine equivalent to $\Jt$-tangent affine hypersphere, then by Lemma \ref{lm::Tranformation}
there exists a matrix $B=[b_{ij}]\in GL(2n+2)$, $B\sim\Jt$ such that $B \circ f$ is tangent to $f$. That is
$$W:=\det[f_{v_1},\ldots,f_{v_{n+1}},f_{u_1},\ldots,f_{u_n},B\circ f]=0.$$
By straightforward (but quite long) computations one may obtain
\begin{align}\label{eq::W}
W=\big(-1\big)^{\frac{(n+1)(n+2)}{2}}\Big(\sum_{k=1}^n\sum_{s=1}^n e^{-u_k+u_s}A_{k,s}+e^{-(u_1+\cdots+u_n)}\sum_{k=1}^n e^{-u_k}A_{k,n+1}\\
\nonumber+e^{u_1+\cdots+u_n}\sum_{k=1}^n e^{u_k}A_{n+1,k}+A_{n+1,n+1}\Big),
\end{align}
where
\begin{align*}
A_{i,j}:=(\cos v_i\cos v_j b_{2i-1,2j-1}+\cos v_i\sin v_j b_{2i-1,2j}\\+\sin v_i\cos v_j b_{2i,2j-1}+\sin v_i\sin v_j b_{2i,2j})
\end{align*}
for $i,j = 1,\ldots,n+1$.
\par Since $W=0$ the above implies that $A_{k,s}=0$ for $k,s=1,\ldots,n$, $k\neq s$ and $A_{k,n+1}=A_{n+1,k}=0$ for $k=1,\ldots,n$.
In consequence we obtain
$$b_{2k-1,2s-1}=b_{2k-1,2s}=b_{2k,2s-1}=b_{2k,2s}=0$$
for $k,s=1,\ldots,n$, $k\neq s$  and
\begin{align*}
b_{2k-1,2n+1}&=b_{2k-1,2n+2}=b_{2k,2n+1}=b_{2k,2n+2}\\
&=b_{2n+1,2k-1}=b_{2n+2,2k-1}=b_{2n+1,2k}=b_{2n+2,2k}=0
\end{align*}
for $k=1,\ldots,n$. Moreover, from (\ref{eq::W}) we also have that $\sum_{k=1}^{n+1}A_{k,k}=0$ that is
\begin{align*}
&\sum_{k=1}^{n+1}\Big(\cos^2 v_{k}b_{2k-1,2k-1}+\sin v_{k}\cos v_{k}(b_{2k-1,2k}+b_{2k,2k-1})+\sin^2 v_{k}b_{2k,2k}\Big)\\
&=\sum_{k=1}^{n+1}\cos^2 v_{k}(b_{2k-1,2k-1}-b_{2k,2k})+\sum_{k=1}^{n+1}\sin v_{k}\cos v_{k}(b_{2k-1,2k}+b_{2k,2k-1})\\
&+\sum_{k=1}^{n+1}b_{2k,2k}=0.
\end{align*}
The above implies that $b_{2k-1,2k-1}=b_{2k,2k}$, $b_{2k-1,2k}=-b_{2k,2k-1}$ for $k=1,\ldots,n+1$ and $\sum_{k=1}^{n+1}b_{2k,2k}=0$.
Summarising, the matrix $B$ can be expressed as a block diagonal matrix
$$
B=
\left[\begin{matrix}
B_1 & 0 & \cdots & 0\\
0 & B_2 & \cdots & 0\\
\vdots & \vdots & \ddots & \vdots\\
0 & 0 & \cdots & B_{n+1}
\end{matrix}\right]
$$
where
$B_k=\left[\begin{matrix}
b_{2k,2k} & b_{2k-1,2k} \\
-b_{2k-1,2k} & b_{2k,2k}
\end{matrix}\right]$ for $k=1,\ldots,n+1$. Note that $\det B_k>0$ and in consequence $\det B=\det B_1\cdot\ldots\cdot \det B_{n+1}>0$.
On the other hand, since $B\sim\Jt$, we have $\det B=\det \Jt=(-1)^{n+1}=-1<0$, since $n$ is even, what contradicts our assumption.
\end{proof}
Let $J$ be the standard complex structure on $\R^{2n+2}\equiv \C^{n+1}$.
Although {\rm(\ref{eq::NotJtTangent})} cannot be transformed into $\Jt$-tangent affine hypersphere one may show that it is
affine equivalent to $J$-tangent affine hypersphere (more details on $J$-tangent affine hyperspheres can be found in \cite{SZ2}).
Actually we have the following general result
\begin{stw}
For every $n\geq 0$ the hypersurface
\begin{equation}\label{eq::JComplexTangent}
(x_1^2+ x_2^2)(x_3^2+ x_4^2)\cdots(x_{2n+1}^2+ x_{2n+2}^2)=1
\end{equation}
is (after suitable affine transformation) $J$-tangent affine hypersphere.
\end{stw}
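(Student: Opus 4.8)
The plan is to exploit the explicit parameterization of \eqref{eq::JComplexTangent} already used in the proof of Proposition \ref{prop::FlatJttangent} (it is literally the same surface as \eqref{eq::NotJtTangent}), and to reduce $J$-tangency to one linear-algebra identity about the position vector. By Theorem \ref{tw::FlatHyperspheres} the surface is a proper flat affine hypersphere, and since it is a level set of a homogeneous polynomial that does not vanish on it, it is centro-affine: by Euler's relation the position vector is transversal. Moreover the surface is invariant under the independent rotations of each coordinate plane $(x_{2k-1},x_{2k})$; as the center of a proper affine hypersphere is an affine invariant, it must be fixed by all these rotations and hence is the origin. Consequently the Blaschke field is $C=-\lambda f$ for some constant $\lambda\neq 0$.

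Now I would introduce the linear complex structure $B$ rotating each consecutive pair of coordinates, $B(x_{2k-1},x_{2k})=(-x_{2k},x_{2k-1})$, so that $B^2=-\id$. With $\sum u:=u_1+\cdots+u_n$, a direct inspection of the parameterization shows that $f_{v_k}$ is supported exactly in the $k$-th coordinate pair, where it equals the $90^\circ$ rotation of that pair; summing over the pairs gives
\[
Bf=\sum_{k=1}^{n+1} f_{v_k}.
\]
Hence $Bf$ is a linear combination of the coordinate tangent fields $f_{v_1},\ldots,f_{v_{n+1}}$ and is therefore tangent to $f$; since $C=-\lambda f$, the field $BC=-\lambda Bf$ is tangent as well.

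To conclude I would invoke the $J$-analog of Lemma \ref{lm::Tranformation} (its proof is identical, with $J$ replacing $\Jt$): it suffices to produce a linear complex structure $B$ with $B\sim J$ and $BC$ tangent to $f$. The structure $B$ above satisfies that $BC$ is tangent, and since any two linear complex structures on $\R^{2n+2}$ are conjugate in $GL(2n+2,\R)$, we have $B\sim J$. This yields an affine transformation carrying \eqref{eq::JComplexTangent} onto a $J$-tangent affine hypersphere. It is worth noting that this is precisely where the situation differs from the para-complex case of Proposition \ref{prop::FlatJttangent}: there the obstruction was the sign $\det\Jt=-1$ for $n$ even, whereas $\det B=\det J=+1$, so the same block construction now goes through. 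The only non-routine steps are the identity $Bf=\sum_k f_{v_k}$ and the identification of the center with the origin; the remainder is bookkeeping with the explicit parameterization and the already-proved transfer lemma.
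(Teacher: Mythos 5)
Your proof is correct and is essentially the paper's argument: both hinge on the fact that a linear complex structure adapted to the coordinate pairs sends the position vector (which, the surface being a proper affine hypersphere centered at the origin, is proportional to the Blaschke normal) into the tangent space. The only differences are cosmetic --- the paper first permutes coordinates so that the standard $J$ itself does the job and then checks tangency via orthogonality of $Jx$ to the gradient of the defining polynomial, whereas you keep the original coordinates, verify $Bf=\sum_k f_{v_k}$ from the parameterization, and transfer via conjugacy of complex structures; you also make explicit the centro-affine/center-at-origin step that the paper leaves implicit.
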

\begin{proof}
Applying $P\colon\R^{2n+2}\ni(x_1,\ldots,x_{2n+2})\mapsto (x_1,x_{n+2},\ldots,x_{n+1},x_{2n+2})\in\R^{2n+2}$
to (\ref{eq::JComplexTangent}) we obtain
\begin{equation}\label{eq::JComplexTangent2}
(x_1^2+ x_{n+2}^2)(x_2^2+ x_{n+3}^2)\cdots(x_{n+1}^2+ x_{2n+2}^2)=1.
\end{equation}
Let us denote by $G$ the gradient of (\ref{eq::JComplexTangent2}). That is
$$
G:=\bigg[\frac{2x_1}{x_1^2+x_{n+2}^2},\ldots,\frac{2x_{n+1}}{x_{n+1}^2+ x_{2n+2}^2},\frac{2x_{n+2}}{x_{1}^2+ x_{n+2}^2},\ldots\frac{2x_{2n+2}}{x_{n+1}^2+ x_{2n+2}^2}\bigg]^T.
$$
Since $J(x_1,\ldots,x_{2n+2})=[-x_{n+2},\ldots,-x_{2n+2},x_1,\ldots,x_{n+1}]^T$ we see that $G$ is orthogonal to $J(x_1,\ldots,x_{2n+2})$
thus (\ref{eq::JComplexTangent2}) is a $J$-tangent affine hypersphere.
\end{proof}
\begin{uw}
The above results show that every proper $(2n+1)$-dimensional flat affine hypersphere is (after suitable affine transformation) either $\Jt$-tangent or $J$-tangent.
Moreover, when $n$ is odd (\ref{eq::JComplexTangent}) is both $\Jt$-tangent and $J$-tangent.
\end{uw}

\emph{This Research was financed by the Ministry of Science and Higher Education of the Republic of Poland.}
\bibliographystyle{aplain}

\begin{thebibliography}{lab}

\bibitem{NS}
{K. Nomizu and T. Sasaki},
{\emph{Affine differential geometry}},
{Cambridge University Press}.
{1994},
\bibitem{DVV}
{F. Dillen, L. Vrancken, L. Verstraelen},
{\emph{Complex affine differential geometry}},
{Atti Acc. Peloritana dei Pericolanti LXVI (1988)}, {232--260}.

\bibitem{SZ}
{Z. Szancer},
{\emph{On para-complex affine hyperspheres}},
{Results Math. \textbf{72} (2017)},
{491-–513}.

\bibitem{SZZ}
{Z. Szancer},
{\emph{$\widetilde{J}$-tangent affine hypersurfaces with an induced almost paracontact structure}},
{arXiv:1710.10488}

\bibitem{SZ2}
{Z. Szancer},
{\emph{J-tangent affine hyperspheres with an involutive contact distribution}}
{Publ. Math. Debrecen \textbf{89/4} (2016)},
{399--413}.


\bibitem{CLS}
{ V. Cort\'es, M. A. Lawn and L. Schäfer},
{\emph{ Affine hyperspheres associated to special para-K\"ahler manifolds}},
{ Int. J. Geom. Methods Mod. Phys. \textbf{3} (2006)},
{995--1009}.



\bibitem{LS}
{M. A. Lawn and L. Schäfer},
{\emph{Decompositions of para-complex vector bundles and para-complex affine
immersions}},
{Results Math. \textbf{48} (2005)},
{246--274}.

\bibitem{AB}
{A. Al-Aqeel, A. Bejancu},
{\emph{On the geometry of paracomplex submanifolds}},
{Demonstratio Math. \textbf{34} (2001) No. 4}, {919--932}.

\bibitem{CFG}
{V. Cruceanu, P. Fortuny, P. M. Gadea,}
{\emph{A survey on para-complex geometry}},
{Rocky Mountain J. Math. \textbf{26} (1996) 1}, {83--115}.

\bibitem{KW}
{S. Kaneyuki, F. L. Williams},
{\emph{Almost paracontact and parahodge structures on manifolds}},
{Nagoya Math. J. \textbf{99} (1985)}, {173--187}.

\bibitem{DV}
{F. Dillen, L. Vrancken},
{\emph{Calabi-type compositions of affine spheres}},
{Differential Geometry and Its Applications \textbf{4} (1994) No. 4}, {303--328}.

\bibitem{DV2}
{F. Dillen, L. Vrancken},
{\emph{$3$-dimensional affine hypersurfaces in $\R^4$ with parallel cubic form}},
{Nagoya Math. J. \textbf{124} (1991)}, {41--53}.

\bibitem{HLLV}
{Z. Hu, C. Li, H. Li, L. Vrancken},
{\emph{Lorentzian affine hypersurfaces with parallel cubic form}},
{Results in Mathematics \textbf{59} (2011) No. 3}, {577--620}.


\bibitem{HL}
{Z. Hu, C. Li},
{\emph{The classification of $3$-dimensional Lorentzian affine hypersurfaces with parallel cubic form}},
{Differential Geometry and its Applications \textbf{29} (2011)}, {361--373}.

\bibitem{HLV}
{Z. Hu, C. Li, L. Vrancken},
{\emph{Locally strongly convex affine hypersurfaces with parallel cubic form}},
{Journal of Differentai l Geometry \textbf{87} (2011) No. 2}, {239--308}.

\bibitem{MN}
{M. A. Magid, K. Nomizu},
{\emph{On affine surfaces whose cubic forms are parallel relative to the affine metric}},
{Proc. Japan Acad. Ser. A Math. Sci. \textbf{65} (1989) No. 7}, {215--218}.

\bibitem{MR}
{M. A. Magid, P. Ryan},
{\emph{Flat affine spheres}},
{Geometriae Dedicata \textbf{33} (1990)}, {277--288}.

\bibitem{V}
{L. Vrancken},
{\emph{The Magid-Ryan conjecture for equiaffine hyperspheres with constant sectional curvature}},
{J. Differential Geom. \textbf{54} (2000)}, {99--138}.




\end{thebibliography}

\vspace{1cm}
\par \ \\
Zuzanna Szancer\\
Department of Applied Mathematics, \\
University of Agriculture in Krakow, \\
253 Balicka St., 30-198 Krakow, Poland\\
e-mail: Zuzanna.Szancer@urk.edu.pl

\end{document}